\documentclass[12pt]{amsart}

\usepackage{amsthm}
\usepackage{amsmath}
\usepackage{amssymb}
\usepackage{enumerate}
\usepackage{graphicx}
\usepackage[hidelinks,pdftex]{hyperref}
\usepackage{booktabs}
\usepackage{color}
\usepackage[dvipsnames]{xcolor}
\usepackage{import}
\usepackage{tikz-cd}
\usepackage{microtype}

\usepackage[margin=3.2cm]{geometry}

%%  If the following line is uncommented, we see the labels of theorems,
%% figures, etc. in the margins.
%% \usepackage[notref,notcite]{showkeys}

%%This command stops the Math Review numbers appearing in the references
\AtBeginDocument{%
   \def\MR#1{}
}

%%%%%%%%%%%%%%%%%%%%%%%%%%%%%%%%%%%%%%%%%%%%%%%%%%%%%%
%% For marking up latex file during editing
%% % Fix marginpar
%% \usepackage{marginnote}
%% \long\def\@savemarbox#1#2{\global\setbox#1\vtop{\hsize\marginparwidth 
%%   \@parboxrestore\tiny\raggedright #2}}
%% \marginparwidth .75in \marginparsep 7pt 
%% \newcommand\marginsoph[1]{{\tiny \marginnote{{\color{magenta} #1}}} \normalmarginpar}
%% \newcommand\marginjess[1]{{\tiny \marginnote{{\color{blue} #1}}} \normalmarginpar}
%% \renewcommand\marginpar[1]{{\tiny \marginnote{{#1}}}
%%   \normalmarginpar}

%% \newcommand{\sop}[1]{{\color{magenta}{#1}}}
%% \newcommand{\jes}[1]{{\color{blue}{#1}}}
%%%%%%%%%%%%%%%%%%%%%%%%%%%%%%%%%%%%%%%%%%%%%%%%%%%%%%%%%%%%%%%%%

%% %% Commands for hyperref
%% \renewcommand{\backreftwosep}{\backrefsep}
%% \renewcommand{\backreflastsep}{\backrefsep}
%% \renewcommand*{\backref}[1]{}
%% \renewcommand*{\backrefalt}[4]{
%%   \ifcase #1
%%   [No citations.]
%%   \or [#2]
%%   \else [#2]
%%   \fi }

%% Number equations and figures according to the section they are in
\numberwithin{equation}{section}
\numberwithin{figure}{section}

\theoremstyle{plain}
\newtheorem{theorem}[equation]{Theorem}

\newtheorem{lemma}[equation]{Lemma}

\newtheorem{proposition}[equation]{Proposition}

\newtheorem*{namedtheorem}{\theoremname}
\newcommand{\theoremname}{testing}
\newenvironment{named}[1]{\renewcommand{\theoremname}{#1}\begin{namedtheorem}}{\end{namedtheorem}}

\theoremstyle{definition}
\newtheorem{definition}[equation]{Definition}

\newtheorem{construction}[equation]{Construction}

 % As in ``f maps _from_ X _to_ Y''.

%%% Symbols %%%

\newcommand{\RR}{{\mathbb{R}}}

\newcommand{\QQ}{{\mathbb{Q}}}

%% Referring to theorems, etc (requires careful labeling)
\newcommand{\refthm}[1]{Theorem~\ref{Thm:#1}}
\newcommand{\reflem}[1]{Lemma~\ref{Lem:#1}}
\newcommand{\refprop}[1]{Proposition~\ref{Prop:#1}}

\newcommand{\refsec}[1]{Section~\ref{Sec:#1}}
\newcommand{\reffig}[1]{Figure~\ref{Fig:#1}}

\newcommand{\bdy}{\partial}

%% Add shortcuts to math operators here

\newcommand{\abs}[1]{\left\vert #1 \right\vert}

\title[Canonical triangulations of Dehn fillings of the Borromean rings]{Canonical triangulations of Dehn fillings of the Borromean rings link complement}

\author{Sophie L. Ham}
\address{School of Mathematics, 
The University of Sydney, NSW 2006, Australia}
\email{sophie.ham@sydney.edu.au}

\begin{document}

\begin{abstract}

The set of canonical decompositions of a cusped hyperbolic $3$-manifold is a complete topological invariant. However, there are only a handful of infinite families for which canonical decompositions are known. In this paper, we find canonical triangulations for Dehn fillings of the Borromean rings link complement and two related manifolds obtained by half-twists. The underlying triangulations were shown to be geometric by Ham and Purcell. To obtain canonical triangulations, we show a local convexity result at every face of the geometric triangulation.

\end{abstract}

\maketitle

%%%%%%%%%%%%%%%%%%%%%%%%%%%%%%%%%%%%%%%%%%%%%%%%%%%%%%%%%%%%%%%%%
\section{Introduction}\label{Sec:Intro}

%This paper is primarily concerned with triangulations of $3$-manifolds: topological, geometric, and canonical. 

A \emph{topological triangulation} of a $3$-manifold $M$ is a subdivision of $M$ into ideal tetrahedra which glue together to give a manifold homeomorphic to $M$. A topological ideal triangulation is all encompassing: the interior of every compact $3$-manifold with torus boundary admits a topological ideal triangulation~\cite{Bing, Moise}.

A \emph{geometric triangulation} is a much stronger notion. It is an ideal triangulation of a cusped hyperbolic $3$-manifold $M$ into positively oriented, finite volume tetrahedra, each of which comes with a hyperbolic structure; gluing the tetrahedra together induces a complete hyperbolic structure on $M$. It remains an open conjecture that every cusped hyperbolic $3$-manifold admits a geometric triangulation. However, the general consensus is in the affirmative, as evidenced by the manifolds of SnapPy~\cite{SnapPy}. Currently, there are only a few families with known geometric triangulations. Gu{\'e}ritaud and Futer found geometric triangulations of $2$-bridge knot complements and punctured torus bundles ~\cite{GueritaudFuter}. Goerner constructed geometric triangulations of manifolds built from isometric Platonic solids~\cite{GoernerII, GoernerI}. On the contrary, Choi found a hyperbolic cone manifold that does not admit a geometric triangulation~\cite{Choi}.

A \emph{canonical triangulation} is even stronger. To define it, let $M$ be a complete hyperbolic $3$-manifold of finite volume, and choose horoball neighbourhoods $H_1, \dots, H_k$ of the cusps $c_1, \dots, c_k$ of $M$. The Ford--Voronoi domain $\mathcal{F}$ is the set of all points in $M$ that have unique shortest path to the union of the horoball neighbourhoods $H_i$. Taking the complement of $\mathcal{F}$ yields a geodesic $2$-dimensional complex $C$. The \emph{canonical decomposition} $\mathcal{D}$ of $M$ relative to $\{H_i\}$ is the \emph{geometric dual} to $C$; that is, the $3$-cells of $\mathcal{D}$ correspond bijectively to the vertices of $C$, the faces of $\mathcal{D}$ correspond bijectively to edges of $C$, and the edges of $\mathcal{D}$ correspond bijectively to the faces of $C$. We will be primarily concerned with determining which manifolds $M$ admit a \emph{canonical triangulation}. This is a manifold for which the canonical decomposition consists only of ideal tetrahedra. 

Epstein and Penner showed that every cusped hyperbolic $3$-manifold of finite volume admits a canonical decomposition into ideal polyhedra \cite{EpsteinPenner}. By extending Epstein and Penner’s construction, Fujii proved that every hyperbolic $3$-manifold of finite volume with non-empty, totally geodesic boundary has a canonical decomposition into partially truncated polyhedra \cite{Fujii}. Akiyoshi showed that the number of decompositions $\mathcal{D}$ produced by Epstein and Penner's method is finite \cite{Akiyoshi}. If $M$ has exactly one cusp, the decomposition is unique.

Let us now mention a motivating reason for studying canonical decompositions. If $M$ is a complete hyperbolic $3$-manifold of finite volume, then the set of canonical decompositions of $M$ is a complete topological invariant by Mostow--Prasad rigidity. Therefore canonical decompositions encode important information about the geometry of a manifold. Unfortunately, it appears to be difficult to compute canonical decompositions in general. SnapPy will compute canonical decompositions of many small examples of $3$-manifolds~\cite{SnapPy}. However, there are only a few classes of infinite families for which canonical decompositions are known. Using symmetry, Sakuma and Weeks constructed canonical decompositions for certain classes of hyperbolic link complements~\cite{SakumaWeeks}. Others found canonical decompositions of once-punctured torus bundles and two-bridge link complements, including Akiyoshi~\cite{Akiyoshi:Ford} using work of Akiyoshi, Sakuma, Wada, and Yamashita~\cite{ASWY}, Lackenby~\cite{Lackenby}, and Gueritaud and Futer~\cite{GueritaudFuter}.

Gu\'{e}ritaud and Schleimer studied canonical decompositions of Dehn fillings~\cite{GueritaudSchleimer}. More specifically, they construct the canonical decomposition of a Dehn filled manifold under certain genericity conditions by gluing a triangulated solid torus into a triangulated manifold. They also find the canonical decompositions of an infinite family of Dehn fillings of one component of the Whitehead link. To obtain canonical decompositions, we will follow the lead of Gu{\'e}ritaud and Schleimer. Their key insight was to check a local convexity condition in Minkowski space at every face of the triangulation.

Our main result on canonical triangulations is the following.

\begin{named}{\refthm{CanonBorromean}}
Let $L$ be a fully augmented link with exactly two crossing circles, as in \reffig{BorromeanRings}. Let $M$ be the manifold obtained by Dehn filling the crossing circles of $S^3-L$ along slopes $m_1, m_2 \in (\QQ\cup\{1/0\})- \{0, 1/0, \pm 1, \pm 2\}$.
Then $M$ admits a canonical triangulation.
\end{named}

The proof of \refthm{CanonBorromean} relies on the underlying triangulation to be geometric. In \cite{JessicaSophie}, we argue that Dehn filling can be performed in such a way that we obtain a geometric triangulation. The method involves constructing different triangulations of solid tori with the correct boundary triangulation. These were first described in \cite{GueritaudSchleimer}, and are shown to be geometric via the theory of angle structures. 

In particular, we prove: 

\begin{named}{\refthm{MainBorromean}}[Ham--Purcell \cite{JessicaSophie}]
Let $L$ be a fully augmented link with exactly two crossing circles, as in \reffig{BorromeanRings}. Let $M$ be the manifold obtained by Dehn filling the crossing circles of $S^3-L$ along slopes $m_1, m_2 \in (\QQ\cup\{1/0\})- \{0, 1/0, \pm 1, \pm 2\}$.
Then $M$ admits a geometric triangulation.
\end{named}

\subsection{Organisation}
This paper is organised as follows.
In \refsec{Back}, we cover the relevant background for the proof of \refthm{CanonBorromean}. In particular, we outline some of the main elements in the proof of \refthm{MainBorromean} as it is necessary to have geometric triangulations. This will involve constructing different triangulations of solid tori. In \refsec{Mink}, we review Minkowski space and the inequality required to prove canonicity. In \refsec{Canon}, we complete the proof of \refthm{CanonBorromean}, on canonical triangulations. In \refsec{GeometryBR}, we describe the required geometry of the unfilled cusp of the Borromean rings link complement. In \refsec{ProvingCanon}, we show canonicity by checking a set of inequalities in Minkowski space.

\subsection{Acknowledgments}
This research was supported in part by an Australian Government Research Training Program (RTP) Scholarship, and in part by an AustMS lift-off fellowship. The author would like to thank Jessica Purcell for helpful conversations.

\section{Background}\label{Sec:Back}

In this section, we review relevant background to complete the proof of \refthm{CanonBorromean}. The results presented here come from previous work by Ham and Purcell \cite{JessicaSophie}.

\subsection{Borromean Rings}

We will consider the Borromean rings link complement as a fully augmented link; see \reffig{BorromeanRings} for a picture. Using the cut and open method for decomposing fully augmented links into polyhedra as in Section~2 of \cite{JessicaSophie}, we show that the Borromean rings link complement decomposes into two regular ideal octahedra. See \reffig{BorrCirclePacking}. 

\begin{figure}
  \centering
  \includegraphics{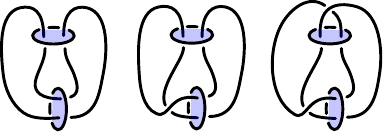}
  \caption{The Borromean rings viewed as a fully augmented link and the other two fully augmented links with exactly two crossing circles.}
  \label{Fig:BorromeanRings}
\end{figure}

In Section~6 of \cite{JessicaSophie}, we prove the following lemma:

\begin{lemma}\label{Lem:BorrDecomp}
Let $L$ be one of the three fully augmented links with exactly two crossing circles, as shown in \reffig{BorromeanRings}. Then $M=S^3-L$ admits a decomposition into two regular ideal octahedra such that one vertex per octahedron meets each crossing circle cusp. If we take one of these vertices to infinity, we get a square that can be arranged in $\mathbb{R}^2$. Put the vertices of one square at $(0,0)$, $(1,0)$, $(1,1)$, and $(0,1)$ in $\RR^2$; put the vertices of other at $(1,0)$, $(2,0)$, $(2,1)$, and $(1,1)$ in $\RR^2$. 
\end{lemma}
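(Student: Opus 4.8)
The plan is to combine the polyhedral decomposition of fully augmented links (the cut-and-open method of Section~2 of \cite{JessicaSophie}) with the standard circle-packing picture of a regular ideal octahedron. First I would recall why $M = S^3 - L$ decomposes into two ideal polyhedra: a fully augmented link complement is cut along the projection plane and along the twice-punctured disks bounded by the crossing circles, producing two identical ideal polyhedra, one from each side of the projection plane, whose faces are coloured shaded (from the twice-punctured disks) and white (from the projection plane). For the three links of \reffig{BorromeanRings} — the ones with exactly two crossing circles and the corresponding link components carrying at most half-twists — one checks directly from the diagram that each polyhedron has exactly two ideal vertices lying on each crossing-circle cusp and that the combinatorial type of each polyhedron is that of an octahedron, with the shaded faces being the two triangles meeting a given crossing-circle vertex. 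This is precisely the computation carried out in Section~6 of \cite{JessicaSophie}, so I would cite it and summarise the outcome rather than redo it.

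Next I would invoke the hyperbolic structure: by Andreev's theorem (or equivalently the rigidity of the circle packing associated to a fully augmented link, as in the work of Purcell on augmented links), the ideal polyhedron with this combinatorial type and with the right-angled dihedral angles forced by the shaded/white decomposition is the regular ideal octahedron. Hence $M$ decomposes into two regular ideal octahedra, and the two vertices of each octahedron meeting a crossing-circle cusp are the two ``axis'' vertices of the octahedron (the ones not lying on the equatorial square). Sending one such vertex to $\infty$ in the upper half-space model, the regular ideal octahedron appears as an infinite ``chimney'' over a Euclidean square, and the link of that ideal vertex is a Euclidean square; this is the standard cusp cross-section picture. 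Choosing the cusp shape so that this square has side length $1$ gives the square with vertices $(0,0),(1,0),(1,1),(0,1)$.

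Finally I would assemble the two squares. The two octahedra are glued to each other along their white (projection-plane) faces, and along the crossing-circle cusp the two ideal vertices-at-infinity — one from each octahedron — are identified by the gluing in a way that places the second square adjacent to the first, sharing the edge from $(1,0)$ to $(1,1)$; this yields the second square with vertices $(1,0),(2,0),(2,1),(1,1)$. The normalisation of positions (which edge is shared, and the absence of shear) follows from tracking the white-face identifications of the octahedra through the cusp, exactly as recorded in \cite{JessicaSophie}. The main obstacle — or rather the main content — is the combinatorial bookkeeping in the second and third steps: verifying that the polyhedra really have octahedral combinatorics with the shaded faces in the claimed position, and then correctly reading off how the two cusp squares fit together under the face identifications. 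Since all three links are handled uniformly (the half-twists only permute which white faces are glued to which, not the cusp cross-section), I would treat the Borromean case in detail and remark that the other two are identical after relabelling. As this lemma is quoted verbatim from \cite{JessicaSophie}, the proof here is a pointer to that reference together with the summary above.
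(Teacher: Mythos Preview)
Your approach is essentially identical to the paper's: both treat this lemma as a citation to Section~6 of \cite{JessicaSophie}, summarising the cut-and-open decomposition into two right-angled ideal octahedra and reading off the square cusp cross-sections by sending a crossing-circle vertex to infinity. One small slip to fix: you write that ``each polyhedron has exactly two ideal vertices lying on each crossing-circle cusp,'' but the lemma (and your own later remark about the two axis vertices going to the two crossing circles) requires exactly \emph{one} vertex per octahedron at each crossing-circle cusp; this is only a wording error and does not affect the argument.
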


If the crossing circle does not encircle a half-twist, then the arc from $(0,0)$ to $(0,1)$ projects to a meridian of the crossing circle. If the crossing circle encircles a half-twist, the arc from $(0,0)$ to $(1,1)$ projects to a meridian. The arc from $(0,0)$ to $(2,0)$ always projects to a longitude of the crossing circle.

\begin{figure}[h]
  \centering
  \import{figures/}{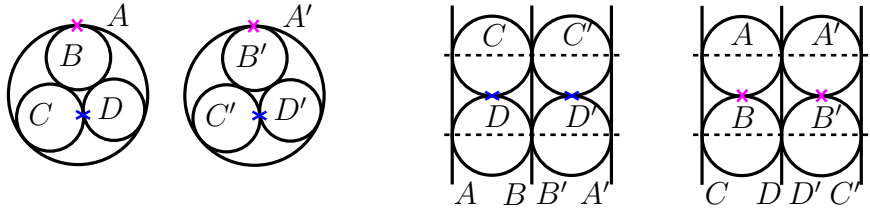}
  \caption{Left: Shows the circle packings corresponding to the two regular ideal octahedra. Right: Shows the squares seen in the crossing circle cusps. Figure from \cite{JessicaSophie}.}
  \label{Fig:BorrCirclePacking}
\end{figure}

The proof is completed by considering the decomposition. In particular, by taking pairs of crossing circles to infinity to get the required squares. Note also that the squares in \reflem{BorrDecomp} correspond to the square bases of the two regular ideal octahedra which make up the decomposition. We see one vertex per octahedron in each crossing circle cusp.   

The next lemma can be found in Section~6 of \cite{JessicaSophie}.

\begin{lemma}\label{Lem:BorromeanTet}
Let $L$ be one of the three fully augmented links with exactly two crossing circles, as shown in \reffig{BorromeanRings}. Then $M=S^3-L$ admits a decomposition into two regular ideal octahedra. The square bases as seen in the crossing circle cusps are glued as follows. 

The square on the left of the first cusp glues to the square on the left of the other cusp by a reflection in the diagonal of negative slope. The square on the right of the first cusp glues to the square on the right of the other cusp by a reflection in the diagonal of positive slope; see \reffig{BorrGluing}.
\end{lemma}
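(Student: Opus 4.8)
The plan is to track carefully how the two regular ideal octahedra are glued to one another, and then read off the induced identifications on the square bases that sit in the crossing circle cusps. By \reflem{BorrDecomp}, $M = S^3 - L$ decomposes into two regular ideal octahedra $O_1, O_2$, and each crossing circle cusp meets exactly one vertex of each octahedron; truncating those vertices produces the squares, with one square per octahedron placed at $(0,0),(1,0),(1,1),(0,1)$ and the other at $(1,0),(2,0),(2,1),(1,1)$ in $\RR^2$. So the statement to be proved is really a bookkeeping claim: under the face-pairings that assemble $O_1 \cup O_2$ into $M$, how does the "left" square of the first cusp get matched with the squares of the second cusp, and by which isometry of $\RR^2$?

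First I would return to the cut-and-open / circle-packing description of the fully augmented link from Section~2 and Section~6 of \cite{JessicaSophie}, as recalled in \reffig{BorrCirclePacking}, and fix an explicit labelling of the eight ideal vertices and the faces of each octahedron. The key point is that the square cross-section at a crossing circle cusp is the link of the ideal vertex of the octahedron lying on that cusp, and its four sides correspond to the four faces of the octahedron incident to that vertex. The face-pairing isometries of the polyhedral decomposition therefore restrict to edge-pairings of these squares; chasing which face of $O_1$ glues to which face of $O_2$ (this is exactly the data encoded in the circle packing, since gluing is by the reflections swapping the two circle packings across the shared plane) tells us which side of the left square of cusp~$1$ is identified with which side of a square of cusp~$2$. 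Because the octahedra are regular and the squares are unit squares in standard position, each such identification is forced to be an isometry of $\RR^2$ permuting the lattice points appropriately, i.e.\ a reflection in one of the two diagonals, and one simply records which diagonal occurs for the left pair and which for the right pair.

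Concretely, I would argue that the involution realising the octahedral gluing acts on the relevant cusp cross-sections as a reflection: orientation considerations (the gluing identifies the two octahedra across a common totally geodesic square, reversing the co-orientation) force the restriction to each square to be orientation-reversing, hence a reflection rather than a rotation; and the constraint that it must carry the ideal-vertex structure of the left square of cusp~$1$ to that of the left square of cusp~$2$ (respectively the right to the right) pins down the axis as the negative-slope diagonal in the first case and the positive-slope diagonal in the second. The difference between the two cases comes from the relative position of the two squares within each cusp (the left square abuts $x=0$, the right square abuts $x=2$), so the two halves of the gluing are mirror images of one another, which is why one gets the negative-slope diagonal on the left and the positive-slope diagonal on the right; see \reffig{BorrGluing}. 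I expect the main obstacle to be purely combinatorial rather than geometric: keeping the many labels — the four cusps, the two octahedra, the eight vertices, and the face-pairings — consistent, and in particular being careful about orientations so that "reflection in the diagonal of negative slope" versus "positive slope" is not accidentally swapped. Once the labelling is fixed, the verification that each induced map is the claimed diagonal reflection is a direct check on a handful of vertices.
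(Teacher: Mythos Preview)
The paper does not actually prove this lemma: it is quoted from Section~6 of \cite{JessicaSophie} and no argument is given here beyond the reference and \reffig{BorrGluing}. So there is no in-paper proof to compare against; what you have written is a proof outline for a result the present paper simply imports.

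That said, your outline is the natural one and is essentially what the cited source does: fix the polyhedral decomposition of $S^3-L$ into the two regular ideal octahedra via the circle-packing picture, read off the face-pairings, and restrict them to the vertex links at the two crossing circle cusps to see how the squares are identified. The orientation argument you give (the gluing across the shared geodesic plane is by a reflection, hence restricts to an orientation-reversing isometry of each square) is correct and is indeed what forces a diagonal reflection rather than a rotation. What remains genuinely unfinished in your proposal is the final combinatorial step: you say the axis is ``pinned down'' and that the left/right asymmetry ``comes from the relative position of the two squares,'' but you never actually perform the vertex chase that distinguishes the negative-slope diagonal on the left from the positive-slope one on the right. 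That check is short but it is the content of the lemma, and your heuristic (left abuts $x=0$, right abuts $x=2$, so the two are mirror images) is suggestive but not a proof; one really has to name which face of $O_1$ glues to which face of $O_2$ and follow one edge of each square through. Since the paper defers this entirely to \cite{JessicaSophie}, the honest fix is either to carry out that explicit labelling here or, as the paper does, to cite the source.
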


\begin{figure}
  \centering
  \import{figures/}{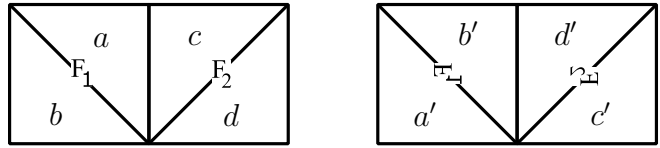}
  \caption{How the square bases of the crossing circle cusps are glued to obtain the Borromean rings link complement. Figure from \cite{JessicaSophie}.}
  \label{Fig:BorrGluing}
\end{figure}

\subsection{Layered solid tori and double layered solid tori}\label{Sec:LayeredST}

To perform Dehn filling on the Borromean rings link complement, we require triangulations of solid tori. Jaco and Rubinstein studied triangulations they called \emph{layered solid tori} in \cite{JacoRubinstein:LST}. However, the boundary of layered solid tori have two triangles whose union is a once-punctured torus. We require solid tori with boundary consisting of a twice-punctured torus.

First we describe the procedure for building a layered solid torus. Then we introduce a related construction for building a \emph{double layered solid torus} — one with boundary consisting of a twice-punctured torus. Gu{\'e}ritaud and Schleimer first considered double layered solid tori in their work on the Whitehead link \cite{GueritaudSchleimer}. 

First we review an ideal triangulation of $\mathbb{H}^2$ called the \emph{Farey triangulation}. Using the Poincaré disk model, we place $1/1$ at the North Pole, and $-1/1$ at the South Pole. Put antipodal points $0/1$ and $1/0=\infty$ to the east and west, respectively; see \reffig{LLFarey}. Two rational slopes $a/b$ and $c/d$ in $\QQ \cup \{1/0\} \subset \bdy \mathbb{H}^2$ are connected by an ideal geodesic whenever $i(a/b, c/d) = |ad - bc|=1$, where $i(a/b, c/d)$ denotes the intersection number of slopes on the torus. 

\begin{figure}[h]
  \centering
  %% Creator: Inkscape 1.0.2-2 (e86c870879, 2021-01-15), www.inkscape.org
%% PDF/EPS/PS + LaTeX output extension by Johan Engelen, 2010
%% Accompanies image file 'LLFarey.pdf' (pdf, eps, ps)
%%
%% To include the image in your LaTeX document, write
%%   \input{<filename>.pdf_tex}
%%  instead of
%%   \includegraphics{<filename>.pdf}
%% To scale the image, write
%%   \def\svgwidth{<desired width>}
%%   \input{<filename>.pdf_tex}
%%  instead of
%%   \includegraphics[width=<desired width>]{<filename>.pdf}
%%
%% Images with a different path to the parent latex file can
%% be accessed with the `import' package (which may need to be
%% installed) using
%%   \usepackage{import}
%% in the preamble, and then including the image with
%%   \import{<path to file>}{<filename>.pdf_tex}
%% Alternatively, one can specify
%%   \graphicspath{{<path to file>/}}
%% 
%% For more information, please see info/svg-inkscape on CTAN:
%%   http://tug.ctan.org/tex-archive/info/svg-inkscape
%%
\begingroup%
  \makeatletter%
  \providecommand\color[2][]{%
    \errmessage{(Inkscape) Color is used for the text in Inkscape, but the package 'color.sty' is not loaded}%
    \renewcommand\color[2][]{}%
  }%
  \providecommand\transparent[1]{%
    \errmessage{(Inkscape) Transparency is used (non-zero) for the text in Inkscape, but the package 'transparent.sty' is not loaded}%
    \renewcommand\transparent[1]{}%
  }%
  \providecommand\rotatebox[2]{#2}%
  \newcommand*\fsize{\dimexpr\f@size pt\relax}%
  \newcommand*\lineheight[1]{\fontsize{\fsize}{#1\fsize}\selectfont}%
  \ifx\svgwidth\undefined%
    \setlength{\unitlength}{218.53619385bp}%
    \ifx\svgscale\undefined%
      \relax%
    \else%
      \setlength{\unitlength}{\unitlength * \real{\svgscale}}%
    \fi%
  \else%
    \setlength{\unitlength}{\svgwidth}%
  \fi%
  \global\let\svgwidth\undefined%
  \global\let\svgscale\undefined%
  \makeatother%
  \begin{picture}(1,0.70364905)%
    \lineheight{1}%
    \setlength\tabcolsep{0pt}%
    \put(0,0){\includegraphics[width=\unitlength,page=1]{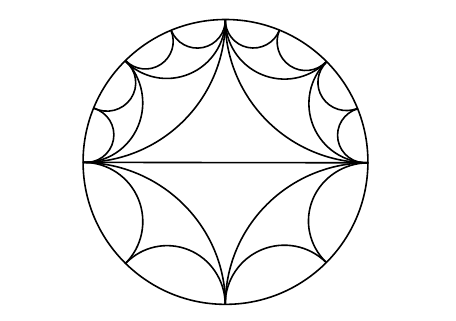}}%
    \put(0.45033123,0.68453215){\color[rgb]{0,0,0}\makebox(0,0)[lt]{\lineheight{1.25}\smash{\begin{tabular}[t]{l}$1/1$\end{tabular}}}}%
    \put(0.82282233,0.34051275){\color[rgb]{0,0,0}\makebox(0,0)[lt]{\lineheight{1.25}\smash{\begin{tabular}[t]{l}$0/1$\end{tabular}}}}%
    \put(0.19321186,0.5785729){\color[rgb]{0,0,0}\makebox(0,0)[lt]{\lineheight{1.25}\smash{\begin{tabular}[t]{l}$2/1$\end{tabular}}}}%
    \put(0.7177594,0.58364048){\color[rgb]{0,0,0}\makebox(0,0)[lt]{\lineheight{1.25}\smash{\begin{tabular}[t]{l}$1/2$\end{tabular}}}}%
    \put(0,0){\includegraphics[width=\unitlength,page=2]{LLFarey.pdf}}%
    \put(-0.04034457,0.33726008){\color[rgb]{0,0,0}\makebox(0,0)[lt]{\lineheight{1.25}\smash{\begin{tabular}[t]{l}$1/0=\infty$\end{tabular}}}}%
    \put(0.43883395,-0.0198624){\color[rgb]{0,0,0}\makebox(0,0)[lt]{\lineheight{1.25}\smash{\begin{tabular}[t]{l}$-1/1$\end{tabular}}}}%
  \end{picture}%
\endgroup%

  \caption{The Farey triangulation.}
  \label{Fig:LLFarey}
\end{figure}

The triangulation of a once-punctured torus consists of three boundary slopes such that the intersection number of each pair of slopes is $1$. Therefore each triangle in the Farey graph determines a triangulation of a once-punctured torus. In fact, the Farey triangulation completely parametrises the space of all triangulations of once-punctured tori. Since the boundary of a layered solid torus is a triangulated once-punctured torus, it turns out we may use the combinatorics of the Farey graph to build such a layering. 
We will take a walk in the Farey graph to construct the triangulation. The walks begins in the centre of the \emph{initial triangle} denoted by $T_0$, and ends at our chosen slope $m \subset \partial \mathbb{H}^2$. Note that $T_0$ could be either $\{0/1, 1/0, 1/1\}$ or $\{0/1, 1/0, -1/1\}$. Along the way, we cross a sequence of triangles in the Farey triangulation, labelled as follows:
\[ (T_0, T_1, \dots, T_N) = (pqr, pqr’, \dots, stm). \]

Note that we require the path to pass through at least two triangles, i.e. $N \geq 2$. That is, $m \notin \{0/1, 1/0, \pm 1/1, \pm 2, \pm 1/2\}$. Moreover, the path is not allowed to cross the same triangle twice. Depending on the direction of our path in the Farey graph at each step, we denote a left turn by $L$ and a right turn by $R$. The first two steps are assigned either $LL$ or $RR$.

The first layer of the solid torus is built by attaching an ideal tetrahedron to the once-punctured torus with triangulation determined by $T_0$. As we pass from $T_0$ to $T_1$, notice that one of the three boundary slopes of $T_0$ is replaced with one of the slopes of $T_1$. Thus two consecutive once-punctured tori have two edge slopes that are the same and two that differ. In particular, the next once punctured torus is obtained from the previous one by identifying edges of the same slope and exchanging the two edges of different slopes. The exchange that takes place is called a \emph{diagonal exchange}. A tetrahedron is inserted each time we perform a diagonal exchange. That is, each time we pass from one Farey triangle to the next in our path, we layer on a tetrahedron. 

Inductively, at the $k$-th step, we layer on a tetrahedron via a diagonal exchange. The current layering of tetrahedra is now homotopy equivalent to a thickened solid torus $T^2 \times I$ with boundary consisting of two once-punctured tori, whose triangulations are each given by $T_0$ and $T_k$. We continue adding tetrahedra until we reach $k=N-1$. Note that we do not layer on a tetrahedron as we pass from $T_{n-1}$ to $T_n$. Instead we will close up the space at this step. 

To obtain a solid torus with homotopically trivial slope $m$, we close up one of the boundary components of the thickened torus. When we reach $k=N-1$, one of the boundary components is a once-punctured torus triangulated by two ideal triangles with slopes coming from $T_{n-1}$. We identify the two ideal triangles by folding across the edge of slope $m’$, so that the edge of slope $m$ bounds a disc; see \reffig{FoldM}. The result is a layering of tetrahedra homeomorphic to a solid torus with boundary triangulation corresponding to $T_0$. 

\begin{figure}[h]
  \centering
\import{figures/}{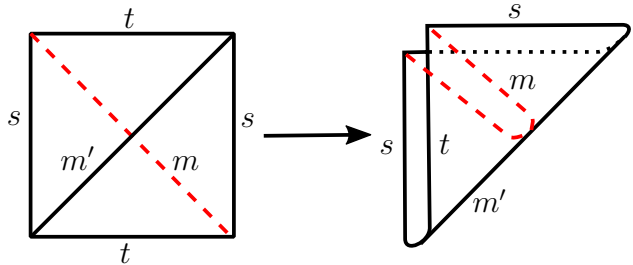}
  \caption{Folding procedure across the edge $m’$ so that $m$ bounds a disc.}
  \label{Fig:FoldM}
\end{figure}

\begin{construction}\label{Constr:SideBySide}

Let $m=l/k$ be a slope such that $l=2s$ is even for some integer $s$, and $m \notin \{0/1, \pm 2/1\}$. Let $(T_0, \dots, T_N)$ be the sequence of triangles we encounter on a path in the Farey graph starting in the initial triangle $T_0$ with slopes $0/1, 1/0, \pm 1/1$, and ending in the triangle $T_N$ with slopes $u, t$ and $s/k$.

The \emph{side-by-side construction} presented here is analogous to the one for a layered solid torus, however, now we layer on two identical ideal tetrahedra at a time, arranged side-by-side. This will give the correct boundary triangulation of a twice-punctured torus. The tetrahedra at each step are formed in exactly the same way as a solid torus by performing a diagonal exchange. 

To begin, we layer on two identical tetrahedra by gluing their faces to a twice-punctured torus whose triangulation corresponds to two copies of $T_0$, arranged side-by-side. See \reffig{SideBySide}. Inductively, after the $k$-th step, we obtain a layering of tetrahedra homotopy equivalent to an interval times a twice-punctured torus. The resulting space has two boundary components consisting of twice punctured torus; their triangulations coming from $T_0$ and $T_k$. This time we continue to perform diagonal exchanges until $k=N$, introducing a tetrahedron at each step. This differs from a layered solid torus where we stop at the point $k=N-1$.

\begin{figure}
  \centering
  \includegraphics{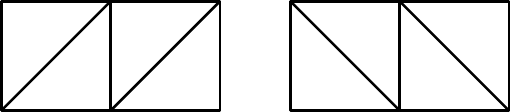}
  \caption{The boundary triangulations of a double layered solid torus.}
  \label{Fig:SideBySide}
\end{figure}

It remains to close up the current layering of tetrahedra. To homotopically kill the slope $m$, we identify the two edges of slope $s/k$ in the boundary component triangulated by two copies $T_n$. This introduces an extra tetrahedron at the core whose four faces are glued to the faces of the innermost layer. Note that the additional tetrahedron in this construction means we only need to exclude slopes in $T_0$. That is, $s/k \notin \{0/1, \pm 1/1\}$.

Suppose we take the cover $\mathbb{R}^2$ of the boundary of a double layered solid torus, so that punctures correspond to points $\mathbb{Z}^2 \subset \mathbb{R}^2$. A meridian $\mu$ of slope $1/0$ lifts to the curve running from $(0,0)$ to $(0,1)$. A longitude $\lambda$ of slope $0/1$ lifts to run from $(0,0)$ to $(2,0)$. Then a slope on the boundary of the torus $m=l/k$ lifts to the curve running from $(0,0)$ to $(2k, l)$.

If $l$ is odd, the lift of $m=l/k$ avoids hitting any of the lifts of punctures $\mathbb{Z}^2$ in its interior. In this case, we may take the double cover of a layered solid torus. If $l=2s$ is even for some integer $s$, then the slope $m=l/k$ hits a lift of a puncture in its interior at $(k,s) \in \mathbb{Z}$; taking the double cover of a layered solid torus will not work. In this case, we use the side-by-side construction to build the triangulation. Thus the slope $l/k$, depending on whether $l$ is odd or even, determines the triangulation of a double layered solid torus. See Lemma 7.1 and Lemma 7.4  in \cite{JessicaSophie} for more details on the above discussion. 

\end{construction}

\subsection{Dehn fillings of the Borromean Rings}\label{Sec:DFBorromean}

In Section 8 of \cite{JessicaSophie}, we prove: 

\begin{theorem}\label{Thm:MainBorromean}
Let $L$ be a fully augmented link with exactly two crossing circles, as in \reffig{BorromeanRings}. Let $M$ be the manifold obtained by Dehn filling the crossing circles of $S^3-L$ along slopes $m_1, m_2 \in (\QQ\cup\{1/0\})- \{0, 1/0, \pm 1, \pm 2\}$.
Then $M$ admits a geometric triangulation.
\end{theorem}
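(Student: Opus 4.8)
The plan is to build, for each admissible slope $m_i$, an ideal triangulation of $S^3-L$ Dehn filled along $m_1,m_2$ by gluing a triangulated solid torus into each crossing-circle cusp, and then to certify that the resulting triangulation is geometric by exhibiting a positive, genuine angle structure on it and invoking the standard consequence (via Casson--Rivin / the volume-maximisation argument) that a triangulation admitting a positive angle structure which is moreover a critical point of the volume functional carries the complete hyperbolic structure with all tetrahedra positively oriented. Concretely, \reflem{BorrDecomp} and \reflem{BorromeanTet} give a concrete description of $S^3-L$ as two regular ideal octahedra with an explicit square base in each crossing-circle cusp, and the gluings of \reffig{BorrGluing} tell me exactly what boundary triangulation of a twice-punctured torus I must produce. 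So the first step is: given the slope $m_i=l/k$ with $l/k\notin\{0,1/0,\pm1,\pm2\}$, use \refconstr{SideBySide} — the double layered solid torus when $l$ is even, and the double cover of an ordinary layered solid torus when $l$ is odd — to obtain a triangulated solid torus whose boundary twice-punctured torus triangulation matches the square bases seen in the cusp of $S^3-L$, and glue it in. Doing both cusps yields a combinatorial ideal triangulation of $M$; the slope restrictions are exactly what guarantees $N\ge 2$ so that the layering is nondegenerate.

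The second, and main, step is the geometry. I would set up the angle-structure equations: assign to each tetrahedron of the octahedral part and each tetrahedron of the two layered solid tori a triple of dihedral angles summing to $\pi$, subject to the edge equations (angles around each edge sum to $2\pi$) and the holonomy/completeness conditions at the cusps. The octahedral part has an obvious symmetric solution coming from the two regular ideal octahedra (all octahedron dihedral angles $\pi/2$, subdividing each octahedron into tetrahedra with angles $(\pi/2,\pi/4,\pi/4)$), so the real work is producing positive angles on the layered solid tori that are compatible with this along the gluing torus. This is precisely where Gu\'eritaud--Schleimer's analysis of layered and double layered solid tori enters: each diagonal exchange in the Farey walk corresponds to a tetrahedron whose angles can be chosen in a one-parameter family, and one shows by an explicit continuity/degree argument (as in Gu\'eritaud's thesis for punctured-torus bundles) that the edge equations along the walk can be solved with all angles strictly positive, with the folding tetrahedron at the core also positive, provided the walk has length $\ge 2$. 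I would then check that the space of positive angle structures on the glued-up triangulation is nonempty, and appeal to the theorem that a positive critical point of the volume functional on this (convex, nonempty) polytope gives the hyperbolic structure; since the only potential critical points are interior and the volume functional is strictly concave, nonemptiness of the polytope already forces a geometric solution.

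The hard part will be the bookkeeping at the interface: verifying that the edge equations of the combined triangulation decouple correctly into ``octahedral'' edges, ``solid torus interior'' edges, and the shared edges on the filling torus, and that the boundary angles forced by the $\pi/2$-octahedron solution are consistent with the boundary angles that the double layered solid torus construction wants — i.e.\ that the square bases of \reflem{BorrDecomp} with their $T_0=\{0/1,1/0,\pm1/1\}$ triangulation are exactly the $T_0$ at which the Farey walk for $m_i$ begins. A second delicate point is the completeness equations at the remaining (knot-component) cusps of $L$: after Dehn filling the crossing circles, one must confirm the cusp shapes are still realised by the symmetric octahedral angles, which again is where the explicit $\RR^2$ coordinates of \reflem{BorrDecomp} and the reflection gluings of \reflem{BorromeanTet} do the work. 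None of these steps is conceptually new — they follow the template of Gu\'eritaud--Futer and Gu\'eritaud--Schleimer — but the precise matching of the octahedral cusp triangulation to the input data of \refconstr{SideBySide} for all three augmented links simultaneously is what needs to be carried out carefully; this is the content of Section~8 of \cite{JessicaSophie}, which I would cite for the detailed verification.
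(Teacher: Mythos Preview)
Your overall strategy---build the triangulation by inserting double layered solid tori at the crossing-circle cusps and certify geometricity via angle structures and volume maximisation---is exactly the approach the paper sketches (and attributes in detail to \cite{JessicaSophie}). Two points in your write-up are off, though, and the first is a genuine structural misunderstanding.

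First, there is no ``octahedral part'' in the final triangulation. As the paper says, to Dehn fill one removes \emph{all} of each octahedron except its equatorial square base, and glues a double layered solid torus to that twice-punctured torus; doing this at both crossing-circle cusps means the entire triangulated manifold $M$ is precisely the two solid tori glued to one another along their boundary squares via the reflections of \reflem{BorromeanTet}. There are no tetrahedra of the form $(\pi/2,\pi/4,\pi/4)$ coming from a subdivided octahedron, and the interface you worry about is not octahedron-to-torus but torus-to-torus. This also explains why, later in the paper, the only ``Borromean'' face to check for canonicity is the shared boundary face between the two solid tori.

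Second, your last sentence overstates what nonemptiness of the angle-structure polytope buys you. Strict concavity of the volume functional does not by itself force the maximum into the interior; one must separately rule out maxima on the boundary faces of the polytope (where some angle degenerates to $0$ or $\pi$). This is the ``leading--trailing'' or boundary-behaviour analysis carried out in Gu\'eritaud--Futer and in \cite{JessicaSophie}, and it is the actual content of the geometricity proof, not a formality.
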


%\begin{lemma}\label{Lem:TriangBorFilling}
%Let $M$ be one of the fully augmented link complements with exactly two crossing circles. 
%Let $m_1, m_2$ be slopes such that
%\[ m_1, m_2 \notin \{ 0/1, 1/0, \pm 1/1, \pm 2/1 \}. \]
%Then the Dehn filling of $M$ on its crossing circle cusps along slopes $m_1$ and $m_2$, denoted $M(m_1,m_2)$, admits a topological triangulation built by gluing together two triangulated solid tori that are either both double covers of layered solid tori, or one double cover and one solid torus with the side-by-side construction of \reflem{DoubleLST}, or two solid tori of that form.
%\end{lemma}

The triangulation is obtained as follows. By \reflem{BorrDecomp}, the Borromean rings link complement decomposes into two ideal octahedra. Moreover, each crossing circle cusp is tiled by the two squares corresponding to the square bases of the two octahedra. The square bases glue together as in \reflem{BorromeanTet}. To perform Dehn filling, we remove a portion of the octahedra meeting each crossing circle cusp from the manifold, leaving only the two square bases. We attach triangulated solid tori to these square bases. As discussed in \refsec{LayeredST}, the slopes $m_1$ and $m_2$ on the crossing circle cusps each determine a triangulation of a solid torus. That is, we attach either both double covers of layered solid tori, or one double cover and one solid torus with the side-by-side construction discussed in \refsec{LayeredST}, or two solid tori of that form. The triangulation is shown to be geometric in \cite{JessicaSophie} by applying the theory of angle structures; the reader is referred to Section 7 and Section 8 of \cite{JessicaSophie} for more details. 

\section{Minkowski space}\label{Sec:Mink}
In this section, we review Minkowski space. The material covered here will be relevant in \refsec{Canon}, where we complete the proof of \refthm{CanonBorromean}, on canonical triangulations. For a description of the five models of hyperbolic space, including the Minkowski space model, see the article by Cannon, Floyd, Kenyon and Parry \cite{CannonFloydKenyonParry}. The reader is also referred to Section~4.3 of Gu\'{e}ritaud-Schleimer \cite{GueritaudSchleimer}.

\emph{Minkowski space} $\mathbb{M}^{n+1}$ is the real vector space $\mathbb{R}^{n+1}$ equipped with the inner product \[ \langle x, y \rangle = x_1y_1+ \dots + x_ny_n - x_{n+1}y_{n+1}\] for $x = (x_1, \dots, x_n, x_{n+1}) \in \mathbb{R}^{n+1}$ and $y = (y_1, \dots, y_n, y_{n+1}) \in \mathbb{R}^{n+1}$. \emph{The Minkowski space model} of hyperbolic $n$-space is defined to be the set \[ X = \{x = (x_1, \dots, x_n, x_{n+1}) \in \mathbb{M}^{n+1} \mid \langle x, x \rangle = -1 , x_{n+1} > 0\}. \] The product $\langle \, \cdot \,, \cdot \, \rangle$ restricts to the Riemannian metric on $X$.

Define the isotropic cone to be the set \[  L = \{v \in \mathbb{M}^{n+1} \mid \langle v, v \rangle = 0\} \subset \mathbb{M}^{n+1}. \] 
A \emph{horoball} in $X$ is the set \[ H_v = \{ x \in X \mid \langle v, x \rangle \geq -1\} \] for $v \in L$. Since $H_v = H_v'$ implies $v = v'$, we may associate a unique isotropic vector $v(H) \in L$ to each horoball $H_{v(H)}$ in $X$.

To move between the Minkowski space model and the upper half-space model, we use the following isometry. Consider the Minkowski space model in $\mathbb{R}^4$. Then \[ X = \{ x = (x_1, x_2, x_3, x_4) \mid \langle x, x \rangle = -1, x_4 > 0 \}. \] Identify the point $(x_1, x_2, x_3, x_4)$ in Minkowski space with the point in the upper half space model of $\mathbb{H}^3$ lying over the complex number $(x_1 + i x_2)/(x_3 + x_4)$ at Euclidean height $1/(x_3 + x_4)$.

Let $H_{d, \zeta}$ be the horoball of Euclidean diameter $d$ about the point $\zeta = \xi + i\eta \in \mathbb{C}$ in the upper half-space model of $\mathbb{H}^3$. Under the above isometry, we identify $H_{d, \zeta}$ with the unique isotropic vector 
\begin{equation}\label{Eq:IsoVec}
v_{d, \zeta} = \dfrac{1}{d} (2\xi, 2\eta, 1-\abs{\zeta}^2, 1+\abs{\zeta}^2) 
\end{equation} 
corresponding to the horoball $\{x \in X \mid \langle v_{d, \zeta}, x \rangle \geq -1\}$ in the Minkowski space model of $\mathbb{H}^3$. So our usual horoballs in the upper half-space model correspond to isotropic vectors in the Minkowski space model. The horoball $H_{h, \infty}$ of Euclidean height $h$ about $\infty$ in the upper half-space model is identified with the isotropic vector $v_{h, \infty} = (0, 0, -h, h)$ corresponding to the horoball $\{x \in X \mid \langle v_{h, \infty}, x \rangle \geq -1\}$ in Minkowski space. Similarly, an infinite set of horoballs $(H_i)_{i \in I}$ in $\mathbb{H}^3$ corresponds to an infinite set of isotropic vectors $(v_i)_{i \in I}$ in Minkowski space. 

Let $M$ be a complete hyperbolic $3$-manifold of finite volume, and choose horoball neighbourhoods $H_1, \dots, H_k$ of the cusps $c_1, \dots, c_k$ of $M$. Suppose we lift the horoballs $H_i$ to an infinite set of horoballs $(H_i)_{i \in I}$ in $\mathbb{H}^3$, then under the above identification, we obtain a corresponding infinite set of isotropic vectors $V = (v_i)_{i \in I}$ in Minkowski space. In \cite{EpsteinPenner}, Epstein--Penner use a convex hull construction in Minkowski space to obtain a canonical decomposition of $M$. In particular, they show that the boundary of the convex hull of $V$ admits a decomposition $\tilde{\mathcal{D}}$ into convex polyhedra. Projecting the faces of $\tilde{\mathcal{D}}$ to $\mathbb{H}^3$ and then to $M$ gives a canonical decomposition $\mathcal{D}$ of $M$.

Conversely, we can determine whether a given ideal decomposition $\mathcal{D}$ of $M$ is canonical in Minkowski space. The decomposition $\mathcal{D}$ lifts to a decomposition $\hat{\mathcal{D}}$ in $\mathbb{H}^3$ and this corresponds to a decomposition $\tilde{\mathcal{D}}$ in Minkowski space. Provided $\tilde{\mathcal{D}}$ is locally convex at every $2$-dimensional face, the decomposition $\mathcal{D}$ is \emph{geometrically canonical}.

As the next proposition demonstrates, proving local convexity at a face in $\tilde{\mathcal{D}}$ is simply a matter of checking an inequality in Minkowski space. This result is Proposition~25 of Gu{\'e}ritaud--Schleimer \cite{GueritaudSchleimer}.
 
\begin{proposition}\label{Prop:LocalConvexity}
Suppose $M$ admits an ideal decomposition $\mathcal{D}$ into polyhedra, corresponding to the polyhedral complex $\tilde{\mathcal{D}}$ in Minkowski space. Let $F = A_1 \cdots A_{\sigma}$ be a $2$-dimensional face of $\tilde{\mathcal{D}}$ with vertices, $A_1, \dots, A_{\sigma}$, with $P \notin F$ and $Q \notin F$ vertices of the faces adjacent to $F$. Then the polyhedral complex $\tilde{\mathcal{D}}$ is locally convex at $F$ if and only if the following inequality holds:
\[ \rho P + (1-\rho) Q = \sum_{i=1}^{\sigma} \lambda_i A_i \text{ where } \rho \in (0,1) \text{ and }  \sum_{i=1}^{\sigma} \lambda_i > 1. \]
\end{proposition}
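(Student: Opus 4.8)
The plan is to reduce the geometric question of local convexity at the face $F$ to the single algebraic inequality stated, following the approach of Gu\'{e}ritaud--Schleimer in the setting of the Epstein--Penner convex hull. First I would set up the local picture: the face $F = A_1 \cdots A_{\sigma}$ spans a codimension-one affine subspace $E \subset \mathbb{M}^4$ (spanned by the isotropic vectors $A_1, \dots, A_\sigma$ together with the origin), and the two polyhedra of $\tilde{\mathcal{D}}$ sharing $F$ contribute the extra vertices $P$ and $Q$, one on each side. Local convexity at $F$ means precisely that, near $F$, the boundary $\partial \operatorname{conv}(V)$ of the convex hull of the full collection of isotropic vectors does not ``cave in'' across $F$; equivalently, the two supporting hyperplanes of the adjacent faces, extended, should have the whole hull (in particular the point $P$, or $Q$) on the correct side. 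Since $P$ and $Q$ lie on opposite sides of the hyperplane $\langle E \rangle$ through $F$ and the origin, there is a unique convex combination $\rho P + (1-\rho)Q$ with $\rho \in (0,1)$ lying in that hyperplane; because $A_1, \dots, A_\sigma$ together with the origin span the hyperplane and the $A_i$ lie on the isotropic cone (hence no one of them is a combination of the others through the origin in a degenerate way), this point can be written uniquely as $\sum \lambda_i A_i$. The content of the proposition is then that convexity is equivalent to $\sum \lambda_i > 1$.

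The key steps, in order, are: (1) normalise coordinates so that the light-like face $F$ lies in a convenient position --- e.g.\ apply an isometry of $\mathbb{M}^4$ taking the geodesic plane dual to $F$ to a standard one, so that the affine hull of $\{A_i\}$ becomes a fixed hyperplane and the ``height'' function measuring the side of $F$ is a single coordinate; (2) express the supporting functional: each adjacent face lies on a hyperplane $\{x : \langle x, n \rangle = -1\}$ for some vector $n$, and convexity across $F$ is the statement that the normal $n_P$ of the $P$-face and $n_Q$ of the $Q$-face satisfy the right inequality when compared along the fold at $F$; (3) translate ``$P$ is below the hyperplane spanned by $F$ and $Q$'' (and symmetrically) into a statement about the coefficient $\sum \lambda_i$ by evaluating the affine functional that vanishes on $E$ and is normalised on the light cone --- here the inequality $\sum \lambda_i > 1$ versus $\le 1$ records exactly whether the segment $PQ$ crosses $\partial\operatorname{conv}(V)$ on the near side or the far side of $F$; (4) check the boundary/degenerate cases ($\rho \to 0,1$, or $\sum\lambda_i = 1$, corresponding to four concyclic-type configurations where the two polyhedra should be merged) are correctly excluded. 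I would carry this out by choosing an affine chart on the cone of positive-time isotropic vectors in which the Epstein--Penner hull becomes an ordinary convex hull of points in $\mathbb{R}^3$, so that ``locally convex at a face'' has its elementary meaning, and then unwinding what the chart does to the linear relation $\rho P + (1-\rho)Q = \sum \lambda_i A_i$.

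The main obstacle I expect is step (3): keeping track of the normalisation constant so that the threshold comes out to be exactly $1$ rather than some other positive number depending on the chosen horoballs. The subtlety is that the $A_i$ are genuine isotropic vectors (not affine points), the relation $\rho P + (1-\rho) Q = \sum \lambda_i A_i$ is an equality of vectors in $\mathbb{M}^4$, and the origin plays a distinguished role because $\langle v, v\rangle = 0$ for all these vectors; one has to verify that the ray from the origin through $\rho P + (1-\rho)Q$ meets the hull face $F$ exactly when $\sum \lambda_i = 1$, is ``outside'' (convex) when $\sum \lambda_i > 1$, and ``inside'' (concave) when $\sum \lambda_i < 1$. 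This is a projective-versus-affine bookkeeping issue: the cleanest route is to pick a space-like or null affine hyperplane transverse to all the relevant rays, project everything into it, and observe that the coefficient sum $\sum \lambda_i$ is precisely the ratio of the two heights (of $\rho P + (1-\rho)Q$ and of its radial projection onto the plane of $F$), so $\sum \lambda_i > 1$ says the combination point lies beyond the face, which is the definition of local convexity. Once that identification is pinned down the rest is linear algebra, and the statement for all the faces of our triangulation in \refthm{CanonBorromean} then becomes a finite list of such inequalities, verified in \refsec{ProvingCanon}.
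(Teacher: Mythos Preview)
The paper does not prove \refprop{LocalConvexity}; it is quoted verbatim as Proposition~25 of Gu\'{e}ritaud--Schleimer \cite{GueritaudSchleimer} and used as a black box throughout \refsec{Canon}. So there is no in-paper argument to compare your sketch against.

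That said, your outline is essentially the standard proof and is sound. The decisive observation is exactly your step~(3): the vectors $A_1,\dots,A_\sigma$ together with the origin span a $3$-dimensional \emph{linear} subspace $E\subset\mathbb{M}^4$ (not an affine one, a minor slip in your first paragraph), $P$ and $Q$ lie on opposite sides of $E$, and the unique point $\rho P+(1-\rho)Q\in E$ with $\rho\in(0,1)$ then has a linear expression $\sum_i\lambda_iA_i$. Dividing by $\sum_i\lambda_i$ gives an affine combination of the $A_i$, hence a point on the $2$-plane $\mathrm{aff}(F)$; thus $\sum_i\lambda_i$ is precisely the radial scale factor comparing the segment $PQ$ to the face $F$ along the ray from the origin. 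Local convexity of the Epstein--Penner hull at $F$ is the statement that this segment lies on the far side of $F$ from the origin, i.e.\ $\sum_i\lambda_i>1$. Your step~(2) with dual normals $n_P,n_Q$ is a valid alternative formulation but is not needed once the radial picture is in place; the paper (and Gu\'{e}ritaud--Schleimer) never introduce those normals explicitly. The degenerate case $\sum_i\lambda_i=1$ you flag corresponds to $P,Q,A_1,\dots,A_\sigma$ being affinely coplanar in $\mathbb{M}^4$, i.e.\ the two cells lying in a common supporting hyperplane and hence belonging to a single cell of the canonical decomposition, which is why strict inequality is required for $\mathcal{D}$ to be the canonical \emph{triangulation}.
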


\section{Canonical decompositions}\label{Sec:Canon}

In this section, we prove \refthm{CanonBorromean}, on canonical triangulations of Dehn fillings of the Borromean rings link complement. We will follow the lead of Gu{\'e}ritaud and Schleimer who found canonical triangulations of Dehn fillings on one cusp of the Whitehead link complement~\cite{GueritaudSchleimer}. In particular, we will check local convexity at every face in the triangulation of \refthm{MainBorromean} via an inequality in Minkowski space; see \refprop{LocalConvexity} for details. In \refsec{GeometryBR}, we describe the geometry of the remaining cusp after Dehn filling two components of the Borromean rings link complement. \refsec{ProvingCanon} is dedicated to proving canonicity by checking the required set of convexity statements in Minkowski space.

\subsection{Geometry of the unfilled cusp}\label{Sec:GeometryBR}

Let $M$ be the manifold obtained by Dehn filling the crossing circle cusps of the Borromean rings link complement along slopes $m_1, m_2 \in (\QQ\cup\{1/0\})- \{0, 1/0, \pm 1, \pm 2\}$. In this section, we describe the geometry of the unfilled cusp of $M$. Recall that the triangulation of $M$ given by \refthm{MainBorromean} is obtained by gluing together two triangulated solid tori that are either both double covers of layered solid tori, or one double cover and one solid torus with the side-by-side construction, or two solid tori of that form; see \refsec{DFBorromean}.

\begin{lemma}
The cusp torus remaining after Dehn filling is tiled by four hexagons: two coming from one solid torus, two from the other. There is always a non-convex hexagon adjacent to a convex one, alternating throughout the cusp torus. See \reffig{SnapPyFourHexagons} and \reffig{FourHexagons}.
\end{lemma}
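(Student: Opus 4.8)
The plan is to understand the cusp cross-section of the unfilled cusp as it is assembled from two pieces: the contribution of the original Borromean rings link complement, and the contribution of each attached triangulated solid torus. From \reflem{BorrDecomp} and \reflem{BorromeanTet}, before Dehn filling the unfilled cusp of $S^3-L$ is tiled in a way dictated by the two octahedra; the portions of those octahedra near the crossing-circle cusps are removed and replaced by triangulated solid tori. So I would first pin down exactly what the boundary of each solid torus looks like in the unfilled cusp. The boundary of a double layered solid torus is a twice-punctured torus triangulated by two copies of $T_0$ side-by-side (or the double cover of a once-punctured-torus boundary), and when this is glued into $M$ along the square bases of the octahedra, the annular strip of the solid torus boundary that runs along the unfilled cusp contributes a fixed number of triangles; combinatorially these triangles group into two hexagonal regions per solid torus. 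The first step is therefore bookkeeping: track how the outermost tetrahedron layer of each solid torus meets the unfilled cusp torus, and verify the count of four hexagons total, two per solid torus.

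Next I would analyse convexity of each hexagon. The key point is that a hexagon here is a union of faces of tetrahedron cross-sections in the cusp torus, and whether the hexagon is convex (geodesically, in the Euclidean structure on the cusp torus) is determined by the dihedral/angle data of the outermost layering tetrahedra, which in turn is controlled by the geometry established in \refthm{MainBorromean} via angle structures. I expect that the two hexagons coming from a single solid torus are mirror images under the gluing symmetry described in \reflem{BorromeanTet} (reflection in a diagonal of the square base), and that one is convex while the other is non-convex — the non-convexity arising precisely at the vertex where the extra fold/closing-off tetrahedron of the solid torus contributes a reflex angle to the cusp cross-section. I would make this precise by computing the cusp shape: place the relevant vertices in $\RR^2$ using the coordinates from \reflem{BorrDecomp} (squares at $(0,0),(1,0),(1,1),(0,1)$ and $(1,0),(2,0),(2,1),(1,1)$), lift the layering to the universal cover of the cusp torus, and read off the angles at each hexagon vertex from the developed image.

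Then the alternating pattern follows from the gluing: the four hexagons are arranged cyclically around the cusp torus (this is forced by how the two solid tori attach to the two square bases, which are themselves glued as in \reflem{BorromeanTet} and \reffig{BorrGluing}), and since each solid torus contributes exactly one convex and one non-convex hexagon, and the two non-convex ones cannot be adjacent (they sit on "opposite" square bases), convex and non-convex must alternate. Finally I would cross-check the combinatorial conclusion against the SnapPy output in \reffig{SnapPyFourHexagons} and the schematic in \reffig{FourHexagons} to confirm the picture is consistent in a worked example.

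The main obstacle will be the middle step: rigorously extracting the Euclidean cusp geometry (in particular the angle at each hexagon vertex, and hence the convex/non-convex dichotomy) from the layered-solid-torus construction, since the tetrahedron shapes depend on the continued-fraction/Farey data of the filling slopes $m_1, m_2$ and one must argue the convexity/non-convexity is uniform across all admissible slopes. I anticipate handling this by isolating the contribution of the innermost closing-off tetrahedron — which is the same regardless of slope — and showing that it alone forces one reflex angle per solid torus, while the remaining layering tetrahedra only contribute convexly; the detailed shape parameters then only need to be controlled enough to see they do not destroy this, which should follow from the positivity/geometricity already guaranteed by \refthm{MainBorromean}.
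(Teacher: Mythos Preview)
Your overall structure (count the hexagons, analyse convexity, deduce alternation) is reasonable, but the proposed mechanism for the convex/non-convex dichotomy is pointed at the wrong end of the solid torus. You attribute the reflex angle to ``the innermost closing-off tetrahedron'' and plan to argue uniformity across slopes by isolating its contribution. In fact the hexagons in question are the \emph{outermost} hexagons---the boundary of each solid torus as seen from the unfilled cusp---and their shape is governed entirely by the first step of the Farey walk, i.e.\ by the sign of the filling slope. The paper's argument is correspondingly much simpler than what you anticipate: before filling, each hexagon is a degenerate ``diamond'' (two pairs of sides collinear along the $\pm 1$ diagonals of the square bases, which lie on totally geodesic planes); after filling, those diagonals bend, and the direction of bending is dictated by the sign of $m_i$. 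No delicate control of tetrahedron shapes deep in the layering is needed.

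Your alternation argument also misidentifies the source. The alternation is not because ``the two non-convex ones sit on opposite square bases'', nor does one need to argue separately that each solid torus contributes one of each type. Rather, adjacent hexagons share a side coming from one of these bent diagonals; when that side bends, it is automatically convex for the hexagon on one side and reflex for the hexagon on the other. The reflection gluing of \reflem{BorromeanTet} then propagates this around the cusp. So the alternation is essentially forced by the shared-edge geometry, not by a parity argument on which solid torus contributes which hexagon. If you rewrite your proof with this ``diamonds that bend at the diagonal'' picture, the slope-uniformity obstacle you flagged disappears.
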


\begin{figure}
  \centering
  \includegraphics[width=\textwidth]{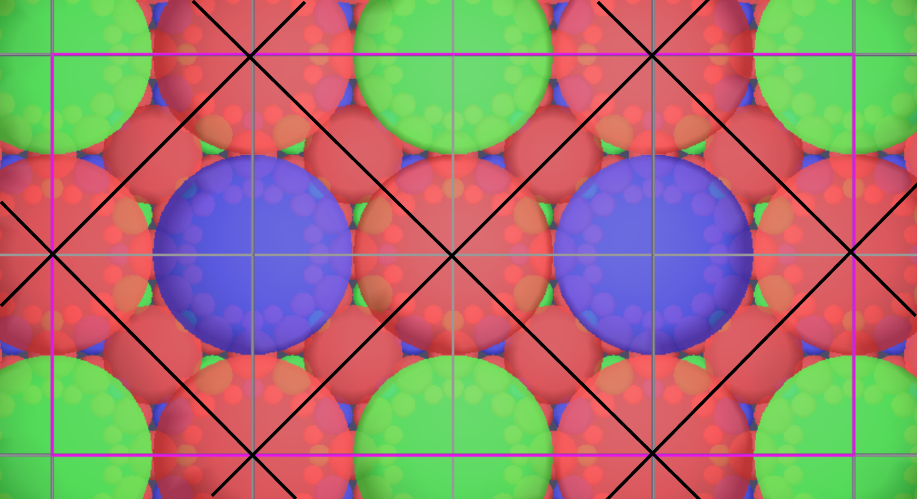}
  
  \vspace{1cm}
  
  \includegraphics[width=\textwidth]{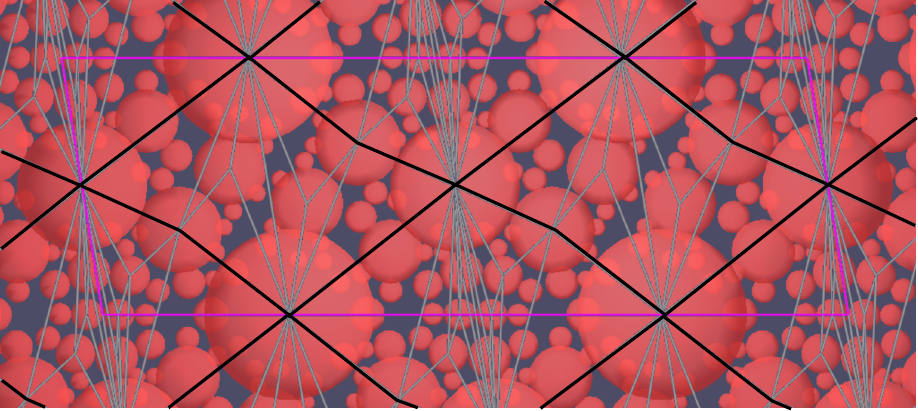}
  \caption{Top is a screenshot from SnapPy showing the unfilled cusp before the Dehn filling. Bottom shows the cusp after Dehn filling along slopes $(1,3)$ and $(1,2)$.}
  \label{Fig:SnapPyFourHexagons}
\end{figure}
  
\begin{figure}
  \centering
  \includegraphics{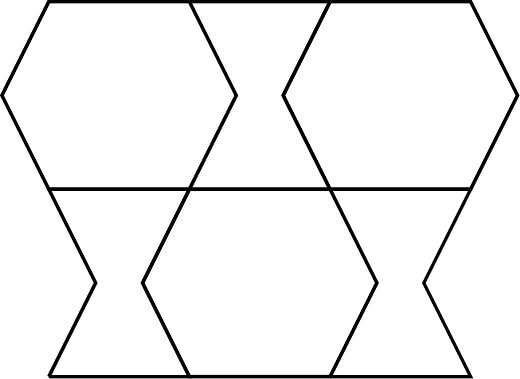}
  \caption{Schematic drawing after Dehn filling of the hexagons tiling the unfilled cusp.}
  \label{Fig:FourHexagons}
\end{figure}

\begin{proof}

Recall that the Borromean rings link complement decomposes into two ideal octahedra such that one vertex per octahedron meets each crossing circle cusp; see \reflem{BorrDecomp}. As such, each crossing circle cusp is tiled by two squares. These correspond to the square bases of the ideal octahedra; the squares are shown in \reffig{BorrGluing}, and identified as in \reflem{BorromeanTet}. The Dehn filling is performed by removing a portion of the octahedra, in fact removing all but the two square bases from the manifold and attaching triangulated solid tori to these square bases. The squares are then triangulated by choosing either both positive or both negative diagonals, as in \reffig{SideBySide}. 

We can view the octahedra in the unfilled cusp as follows. Each of the square bases meets four ideal vertices all identified to the cusp that won’t be filled. These correspond to three red horoballs in \reffig{SnapPyFourHexagons} (top) as well as the one at infinity. In particular, each square forms a line running through the centres of three red horoballs, lying between the blue and the green horoballs. These lines cut out diamond shapes in \reffig{SnapPyFourHexagons} (top), giving four distinct diamonds, each of the same shape. 
 
When we select the diagonal slopes $\pm 1$ of the square bases, these appear as geodesics from infinity to the centres of the smaller red horoballs between the blue and the green horoballs. Thus the diagonal slopes lie on totally geodesic vertical planes meeting the unfilled cusp in diamond shapes. So the four hexagons look like diamonds before Dehn filling.

After Dehn filling, the diagonal slopes $\pm 1$ bend either inwards or outwards depending on the signs $m_1, m_2$ of our slopes of the Dehn filling. This always results in a non-convex hexagon adjacent to convex one; see \reffig{SnapPyFourHexagons} (bottom) and \reffig{FourHexagons}. The three cases are shown in \reffig{Borr3Cases}.

Finally recall that the boundary of one solid torus is glued to the boundary of the other by a reflection in the diagonal; see \reflem{BorromeanTet}. The six sides of each hexagon inherit this gluing, resulting in an alternating pattern throughout the unfilled cusp; see again \reffig{SnapPyFourHexagons} (bottom).

\end{proof}

\begin{lemma}\label{Lem:OneFace}
To prove geometric canonicity at every face on the boundary of the Borromean rings link complement, we only need to show the convexity condition of \refprop{LocalConvexity} holds for one pair of adjacent cusp triangles, across two adjacent hexagons in the unfilled cusp.
\end{lemma}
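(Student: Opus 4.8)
The plan is to exploit the symmetries of the triangulation to reduce the full collection of local convexity checks down to a single one. First I would observe that every $2$-dimensional face of $\tilde{\mathcal D}$ that lies in the interior of one of the two attached solid tori is handled elsewhere: by \refthm{MainBorromean} the triangulations of the double layered solid tori (and the side-by-side solid tori) are geometric, and the convexity of the Epstein--Penner complex across those internal faces is exactly the content of Gu\'eritaud--Schleimer's analysis of layered solid tori in \cite{GueritaudSchleimer}, which our construction is built to mirror. So the only faces where convexity is not already known are those lying on the image of the original boundary between the octahedral part of the Borromean rings complement and the glued-in solid tori; in the cusp picture these are precisely the edges of the four hexagons tiling the unfilled cusp, i.e. the triangles sitting across two adjacent hexagons.

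Next I would use the symmetry group of the decomposition to cut the remaining cases down to one. Recall from \reflem{BorrDecomp} and \reflem{BorromeanTet} that $M$ is built from two regular ideal octahedra, with one vertex of each octahedron at every crossing-circle cusp, and that the square bases in the two crossing-circle cusps are glued to one another by reflections in the diagonals. This gluing pattern forces the four hexagons in the unfilled cusp to come in a single symmetry orbit up to the action that swaps the two solid tori and the action that cyclically permutes the hexagons around the cusp torus: each non-convex hexagon is adjacent to a convex one, and the reflection identifications make any edge between a non-convex and a convex hexagon equivalent to any other such edge. Since \refprop{LocalConvexity} is a statement purely about the positions of five isotropic vectors in $\mathbb M^4$ (the four vertices $A_1,\dots,A_\sigma$ of the face $F$ together with the two opposite vertices $P$ and $Q$), and these configurations are carried to one another by isometries of $\mathbb H^3$ — which act linearly on $\mathbb M^4$ and preserve the Minkowski inner product — the inequality $\sum\lambda_i>1$ either holds for all of them or for none. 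Hence it suffices to verify it for one pair of adjacent cusp triangles spanning two adjacent hexagons.

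I would then record carefully which single configuration is the representative, so that \refsec{ProvingCanon} can do the one remaining computation: pick the edge shared by a convex and a non-convex hexagon, identify the two cusp triangles abutting it, and lift the four ideal vertices of the face together with the two apex vertices $P$, $Q$ to isotropic vectors using \refeqn{IsoVec}, with coordinates coming from the square-base geometry described in \refsec{GeometryBR}. The main obstacle here is not the convexity inequality itself (that is deferred) but making the reduction airtight: one must check that the symmetries being invoked are genuine isometries of the \emph{filled} manifold $M$ and that they act transitively on the relevant faces despite the three different slope cases (both fillings positive, both negative, or mixed) shown in \reffig{Borr3Cases}. I expect one has to treat the sign of the bending of the diagonal slopes $\pm1$ as data that the symmetry argument must respect — so the precise statement is that within each of the three cases the four hexagons form one orbit, and the final single check in \refsec{ProvingCanon} is then performed in a way uniform enough (or case-by-case enough) to cover all three.
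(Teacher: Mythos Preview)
Your reduction strategy diverges from the paper's and contains a real gap.

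The paper's argument is short and entirely local. It uses two observations: (i) each boundary face of a double layered solid torus lifts to three of the six sides of a hexagon, alternating around it, and since these are lifts of \emph{one and the same} face of $M$ the convexity inequality of \refprop{LocalConvexity} is literally identical at all three; (ii) each hexagon is centrally symmetric (its vertices are $-1,\zeta,\zeta',1,-\zeta,-\zeta'$), so the isometry $z\mapsto -z$ of $\HH^3$ carries the configuration of isotropic vectors at one face to the configuration at the opposite face, and the argument transfers verbatim. No global isometry of $M$ is needed, only an isometry of $\HH^3$ relating the two lifted configurations.

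You instead try to make the four hexagons a single orbit under symmetries of $M$, invoking in particular ``the action that swaps the two solid tori.'' This is where the argument breaks: when $m_1\neq m_2$ the two solid tori are not isometric and no such isometry of $M$ exists. Restricting to a fixed sign case from \reffig{Borr3Cases} does not help, since the magnitudes of $m_1,m_2$ can still differ. Nor are the diagonal reflections of \reflem{BorromeanTet} symmetries of the filled manifold; they are the gluing maps themselves. You correctly flag this as a worry but do not resolve it, and it cannot be resolved along these lines. Note also that even as a statement about hexagons your transitivity claim is visibly false: the convex and non-convex hexagons in \reffig{FourHexagons} have different shapes, so they cannot lie in one isometry orbit.

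A minor point: your opening paragraph about interior faces of the solid tori is correct but not part of this lemma. \reflem{OneFace} concerns only the boundary faces; the interior and core faces are treated separately in \reflem{GeomCanDLST} and \reflem{GeomCanSideLST}.
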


\begin{proof}
Each boundary face of a double layered solid torus appears three times on the boundary of a hexagon as a lift of that face. So we only need to show the convexity condition of \refprop{LocalConvexity} holds for one lift of that face. This proves local convexity at three faces, alternating every other one around the hexagon. But the hexagon is symmetric, so the same argument for one face applies immediately to the opposite face, which is the second side of the hexagon. 
\end{proof}

\begin{lemma}\label{Lem:ThreeCases}
There are only three ways to match up one pair of adjacent cusp triangles, across two adjacent hexagons in the cusp that won't be filled.
\end{lemma}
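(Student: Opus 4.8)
The plan is to reduce the matching problem to a finite enumeration governed by the combinatorics of how the two solid tori are glued along the square bases. Recall from \reflem{BorromeanTet} that the boundary of one solid torus is glued to the boundary of the other by a reflection in a diagonal of the square base, and that by the discussion in \refsec{GeometryBR} each hexagon in the unfilled cusp is a bent diamond whose bending direction is determined by the sign of the corresponding Dehn filling slope. Thus a ``pair of adjacent cusp triangles across two adjacent hexagons'' is specified by two pieces of data: which hexagon is convex and which is non-convex (the roles are forced to alternate by the previous lemma), and how the triangulated square base of the first solid torus sits against the triangulated square base of the second under the reflection gluing.

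First I would fix notation for the two adjacent hexagons, say $h_1$ coming from the first solid torus and $h_2$ from the second, sharing an edge $e$ which is a lift of a face of the boundary triangulation. By \reflem{OneFace} we may assume $e$ is one of the two ``symmetric'' sides of each hexagon, so the local picture near $e$ is determined by the two cusp triangles $\Delta_1 \subset h_1$ and $\Delta_2 \subset h_2$ incident to $e$. Next I would record that the diagonal slope of each square base is $+1$ or $-1$, and that the reflection in \reflem{BorromeanTet} is either in the diagonal of positive slope or the diagonal of negative slope depending on whether we are looking at the left or the right square. Combining the choice of diagonal on each side with the two possible reflections, and then using the alternating convex/non-convex structure to eliminate the cases that do not actually occur as adjacent hexagons, cuts the naive count down. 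I expect the bookkeeping here to reduce to exactly three surviving configurations: one for each of the three sign patterns of $(m_1,m_2)$ up to the symmetry swapping the two cusps, matching the three cases already flagged in \reffig{Borr3Cases}.

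The key steps, in order, are: (i) list the combinatorial data determining an adjacent triangle pair (diagonal choice on each square, which reflection glues them, which hexagon is convex); (ii) use the alternation of convex and non-convex hexagons from the preceding lemma to rule out the pairings where both hexagons have the same convexity type; (iii) use the symmetry of the hexagon, again from \reflem{OneFace}, and the symmetry interchanging the two solid tori to identify configurations that differ only by a symmetry of the whole picture; (iv) check that what remains is precisely three inequivalent matchings, and exhibit a representative of each, identifying them with the three cases of \reffig{Borr3Cases}. The main obstacle I anticipate is step (ii)--(iii): making sure the symmetry reductions are genuine symmetries of the unfilled cusp (so that a convexity check for one representative really does transfer), rather than merely abstract combinatorial coincidences, and making sure no case is double-counted or wrongly discarded. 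This is essentially a careful, finite case analysis on the square-base gluing, and once the reductions are justified the count of three should fall out directly.
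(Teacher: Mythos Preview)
Your plan heads in a different direction from the paper and is considerably more elaborate than what is needed. The paper's proof is essentially two lines: the non-convex hexagon can be triangulated in three ways, because when building the layered solid torus one may cover any one of the three boundary slopes first; the convex hexagon can be triangulated in only two ways, and those two are related by the hexagon's own symmetry. Hence there are $3 \times 1 = 3$ matchings. No enumeration of gluing data, reflections, or sign patterns of $(m_1,m_2)$ enters the argument at all.

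Your proposed enumeration (diagonal choice on each square, which reflection, which hexagon is convex) is counting the wrong objects. The reflection in \reflem{BorromeanTet} is not a free parameter --- it is fixed once you choose which pair of adjacent hexagons to look at --- and ``diagonal choice on each square'' records only two options per side, whereas the relevant combinatorics on the non-convex side is a choice among \emph{three} slopes. So your raw count and the paper's raw count do not even start from the same set. Your intuition that the three cases should line up with the three sign patterns of $(m_1,m_2)$ up to swapping the cusps is correct (that is exactly how \reffig{Borr3Cases} is labelled), but that correspondence is a consequence of the lemma, not the mechanism of its proof. If you pursue your route you would still need to explain why the outermost cusp triangle of a layered solid torus is determined by the sign of the slope in the way you are assuming, and then carry out the symmetry reduction you flag as the main obstacle; the paper bypasses all of this by counting hexagon triangulations directly.
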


\begin{figure}
  \centering
  \import{figures/}{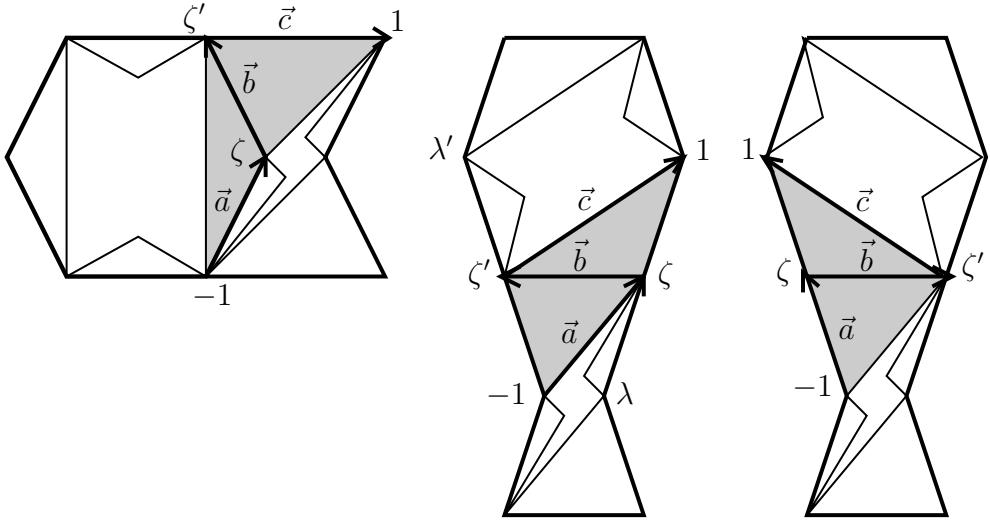}
  \caption{Shown are the three cases of pairs of triangles that must be checked for \refprop{LocalConvexity}. Left: The case that the slopes $m_1$ and $m_2$ have opposite signs. Middle: The case that the slopes $m_1$ and $m_2$ are both negative. Right: The case that the slopes $m_1$ and $m_2$ are both positive.}
  \label{Fig:Borr3Cases}
\end{figure}

\begin{proof}
Recall that when we build a layered solid torus, we have the choice to cover one of three slopes on the boundary. This corresponds to three distinct triangulations of the non-convex hexagon in the unfilled cusp. For the convex hexagon, there are only two possible triangulations and these are symmetric. There are only three ways to match up the triangulations of the non-convex and convex hexagons; these are shown in Figure~\ref{Fig:Borr3Cases}.
\end{proof}

\subsection{Proving geometric canonicity}\label{Sec:ProvingCanon}

Let $M$ be the manifold obtained by Dehn filling the crossing circle cusps of the Borromean rings link complement along slopes $m_1, m_2 \in (\QQ\cup\{1/0\})- \{0, 1/0, \pm 1, \pm 2\}$. The geometric triangulation of $M$ given by \refthm{MainBorromean} is obtained by gluing together two triangulated solid tori. In this section, we prove that this triangulation is canonical. We need to prove geometric canonicity at every face in the triangulation, so there are a number of cases consider. In particular, we need to show the convexity condition of \refprop{LocalConvexity} holds when the face is:
\begin{itemize}
\item[(1)] a face on the boundary of the layered solid torus; this is the only face also in the Borromean rings link complement. Thus we refer to it below as a face on the boundary of the Borromean rings link complement.
\item[(2)] an interior face of a double cover of a layered solid torus,
\item[(3)] a face at the core of a double cover of a layered solid torus,
\item[(4)] an interior face of a side-by-side layered solid torus, 
\item[(5)] a face at the core of a side-by-side layered solid torus. 
\end{itemize}

Gu\'{e}ritaud and Schleimer prove geometric canonicity at an interior face and at the core of a layered solid torus \cite[Section~4.4]{GueritaudSchleimer}. They note that this extends to doubled layered solid tori, showing (2) through (5). Since they only sketch the argument, we work through it carefully here. As we shall see later, their argument can also be applied to show geometric canonicity at a face on the boundary of the Borromean rings link complement, item (1).

Recall that the boundary of a layered solid torus forms an outermost hexagon in the cusp diagram; a path around the puncture on the boundary meets exactly six triangles, and these form a hexagon. A new hexagon is introduced in the interior each time a tetrahedron is added. Label the vertices of the hexagon between the tetrahedra $\Delta_i$ and $\Delta_{i+1}$ as $-1, \zeta, \zeta', 1, -\zeta, -\zeta'$; see \reffig{GSHex}. We may assume without loss of generality that the tetrahedron $\Delta_{i+1}$ is added in the $L$ direction.

\begin{figure}
  \centering
  \import{figures/}{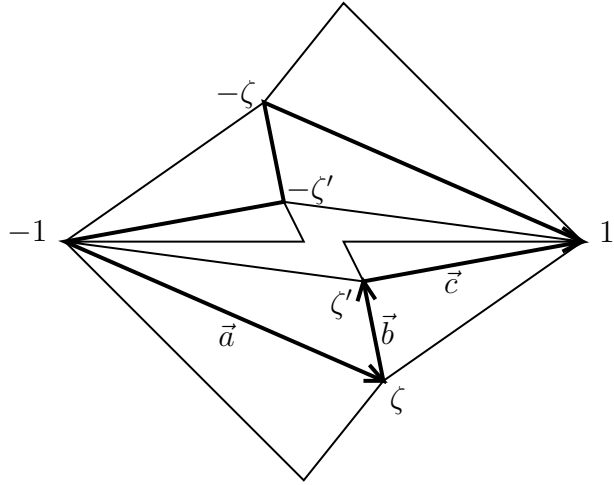}
  \caption{The hexagon between the tetrahedra $\Delta_i$ and $\Delta_{i+1}$.}
  \label{Fig:GSHex}
\end{figure}

Following Gu\'{e}ritaud and Schleimer, define the following three vectors
\begin{align*}
  \zeta + 1 & = \vec{a} = a\exp(iA) \\
  \zeta' - \zeta & = \vec{b} = b\exp(iB) \\
  1 - \zeta' & = \vec{c} = c\exp(iC),
\end{align*}
where $a, b, c$ are positive real numbers. The angles $A, B, C$ are defined modulo $2\pi$. 

Each boundary face of a layered solid torus appears three times on the boundary of the outermost hexagon, alternating around the hexagon. This is also true for any hexagon in the interior because we can strip off the outer layers of tetrahedra to obtain a smaller layered solid torus. So there exists a M\"{o}bius transformation $f$ that maps the face $(-1\zeta\infty)$ to the face $(\zeta'1\infty)$. Then $f$ has the form
\[ f: u \mapsto 1 + \dfrac{(\zeta + 1)(\zeta' - 1)}{u + 1} = 1 - \dfrac{\vec{a}\vec{c}}{u + 1}. \]

Throughout this section, we choose an initial horoball $H_{1, \infty}$ of height $1$ centred at $\infty$ and its images under covering transformations. 

\begin{lemma}\label{Lem:diamac}
The horoball about the vertex $1$ of the hexagon has Euclidean diameter $ac$. In other words, the diameter is the product of the lengths of the adjacent vectors in the hexagon.
\end{lemma}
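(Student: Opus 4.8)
The plan is to track horoball diameters under the Möbius transformation $f$ that realizes the three-fold symmetry of the hexagon. First I would recall the basic relationship between the isometry $f\colon u \mapsto 1 - \vec{a}\vec{c}/(u+1)$ and horoball diameters: a Möbius transformation $u \mapsto (\alpha u + \beta)/(\gamma u + \delta)$ with $\alpha\delta - \beta\gamma = 1$ scales the Euclidean diameter of a horoball centred at a point $z \neq \infty$ by the factor $|\gamma z + \delta|^{-2}$, and sends the horoball $H_{h,\infty}$ of height $h$ about $\infty$ to a horoball of diameter $h/|\gamma|^{2}$ (when $\gamma \neq 0$). So the key computation is to write $f$ in normalized matrix form and read off its lower row.

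Concretely, $f(u) = 1 - \dfrac{\vec{a}\vec{c}}{u+1} = \dfrac{(u+1) - \vec{a}\vec{c}}{u+1} = \dfrac{u + (1 - \vec{a}\vec{c})}{u + 1}$, so as a matrix $f$ is represented by $\begin{pmatrix} 1 & 1 - \vec{a}\vec{c} \\ 1 & 1 \end{pmatrix}$, with determinant $1 - (1 - \vec{a}\vec{c}) = \vec{a}\vec{c}$. Dividing through by $\sqrt{\vec{a}\vec{c}}$ to normalize the determinant to $1$, the lower row of the normalized matrix is $(\gamma, \delta) = (1/\sqrt{\vec{a}\vec{c}},\, 1/\sqrt{\vec{a}\vec{c}})$. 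Now I apply this to the horoball $H_{1,\infty}$ of height $1$ about $\infty$: its image under $f$ is a horoball of Euclidean diameter $1/|\gamma|^{2} = |\vec{a}\vec{c}| = ac$, since $a$ and $c$ are positive reals.

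It remains to identify where this image horoball sits. Since $f$ maps the face $(-1\,\zeta\,\infty)$ to the face $(\zeta'\,1\,\infty)$, it fixes $\infty$ as a vertex of the image face and sends the vertex $-1$ to... — here I need to check that $f(-1) = \infty$? No: $f(-1)$ is undefined ($u = -1$ is the pole), which is exactly the point. The pole of $f$ is at $u = -1$, and $f(\infty) = 1$. So $f$ carries the horoball at $\infty$ to a horoball at $f(\infty) = 1$, and carries the horoball at the vertex $-1$ (which is the pole) out to $\infty$. Thus the image of $H_{1,\infty}$ under $f$ is precisely the horoball about the vertex $1$ of the hexagon, and by the diameter computation above it has Euclidean diameter $ac$, as claimed. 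Since by convention all horoballs in this section are images of $H_{1,\infty}$ under covering transformations and $f$ is such a transformation (it is realized by peeling off outer layers of the layered solid torus, as noted before the lemma), this is consistent with the chosen normalization.

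The main obstacle I anticipate is purely bookkeeping: making sure the vertex-labelling convention in \reffig{GSHex} is matched correctly to $f$ — i.e., confirming that the face sent to $(\zeta'\,1\,\infty)$ really does have $1$ as the image of $\infty$ (equivalently, that $f(\infty) = 1$, which is immediate from the formula) and that $-1$ is the pole (immediate as well) — and being careful that the "diameter $1$ at $\infty$" normalization is the one that makes the scaling factor come out to exactly $ac$ with no stray constant. Everything else is the standard one-line Möbius-transformation-scales-horoball-diameter-by-$|cz+d|^{-2}$ fact, which I would either cite or prove in a sentence using the isometry to Minkowski space via \refeqn{IsoVec}.
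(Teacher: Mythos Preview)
Your argument is correct. The paper arrives at the same conclusion by a slightly different route: rather than normalising the matrix for $f$ and invoking the $h/|\gamma|^{2}$ scaling rule directly, it considers the isometric sphere of $f$ (centred at the pole $-1$, of radius $\sqrt{|\vec a\,\vec c|}$), argues via a short hyperbolic-distance computation that the top of that sphere is equidistant from $H_{1,\infty}$ and $f(H_{1,\infty})$, and then reads off the horoball diameter as the square of the isometric-sphere radius, namely $(\sqrt{|\vec a\,\vec c|})^{2}=ac$. The two arguments are equivalent --- the isometric-sphere radius is exactly $1/|\gamma|$ for the normalised matrix --- so your version is a more algebraic packaging of the same computation. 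Your bookkeeping ($f(\infty)=1$, pole at $-1$, and the height-$1$ normalisation of the horoball at $\infty$) matches the paper's setup precisely.
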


\begin{proof}
Since $f(\infty) = 1$, there exists a horoball about the vertex $1$ in the hexagon. We wish to find its diameter. Consider the isometric sphere between the horoball $H_{1, \infty}$ of height $1$ about infinity and the horoball $f(H_{1, \infty})$. The point directly over the centre of the isometric sphere on the isometric sphere lies at a distance $d$ from $H_{1, \infty}$ and at a distance $d$ from $f(H_{1, \infty})$. Let $p$ be the point at the top of the isometric sphere. Then \[d = \int^1_p \dfrac{1}{y} \,dy = \ln\left(\dfrac{1}{p}\right), \quad \text{or} \quad p = \exp(-d). \] Let $q$ be the diameter of the horoball $f(H_{1, \infty})$. Then \[d = \int^p_q \frac{1}{y} \,dy = \ln(p) - \ln(q) = -d - \ln(q),\] which implies that $q = \exp(-2) = p^2$. Since the isometric sphere has diameter $\sqrt{\abs{\vec{a}\vec{c}}}$, the diameter of the horoball $f(H_{1, \infty})$ is $(\sqrt{\abs{\vec{a}\vec{c}}})^2 =\abs{\vec{a}\vec{c}} = ac$. Hence the horoball $H_{1, \infty}$ maps to $H_{ac, 1}$ under $f$. In other words, the horoball about the vertex $1$ of the hexagon has Euclidean diameter $ac$.
\end{proof}

Let $\alpha$ be the vertical edge with vertices $1$ and $\infty$. By symmetry of the hexagon, there is another edge labelled $\alpha$ with vertices at $-1$ and $\infty$. By symmetry again, the horoball at the end of the edge $\alpha$ with vertices $1$ and $\infty$ has the same diameter as the one at the end of the other edge labelled $\alpha$. That is, given $H_{ac, 1}$, there exists a horoball $H_{ac, -1}$ about the vertex $-1$ with diameter $ac$. 

Similarly, there are two vertical edges $\beta$ with endpoints at $\zeta$ and $-\zeta$ running into $\infty$. 

\begin{lemma}\label{Lem:diamab}
The horoball about the vertex $\zeta$ of the hexagon has Euclidean diameter $ab$. In other words, the diameter is the product of the lengths of the adjacent vectors in the hexagon.
\end{lemma}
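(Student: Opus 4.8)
The plan is to mimic the proof of \reflem{diamac}, but using the correct Möbius transformation realising the hexagon's three-fold symmetry on the face opposite to the one used there. By the symmetry of the hexagon (and the fact that this symmetry persists for every interior hexagon, obtained by stripping off outer layers of tetrahedra), the face $(-1\,\zeta\,\infty)$ is identified not only with $(\zeta'\,1\,\infty)$ but — via a second covering transformation $g$ — with the third occurrence of that face around the hexagon, namely the face with the vertex pair $\{-\zeta,-\zeta'\}$ (equivalently, after relabelling, the argument should produce a horoball about $\zeta$ whose diameter is governed by the two vectors adjacent to the vertex $\zeta$, which are $\vec{a}=\zeta+1$ and $\vec{b}=\zeta'-\zeta$). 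Concretely, I would write down the Möbius transformation $g$ that carries the vertical edge $\alpha=(1\infty)$, or rather the appropriate edge, to the vertical edge $\beta=(\zeta(-\zeta)\infty)$ and sends $\infty$ to $\zeta$; as in the displayed formula for $f$, $g$ has the form $u\mapsto \zeta - \dfrac{\vec{a}\,\vec{b}}{u+1}$ (up to checking the constant, which is pinned down by requiring $g$ to match the relevant pair of hexagon faces), so its isometric sphere has diameter $\sqrt{\abs{\vec{a}\,\vec{b}}}$.

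With $g$ in hand, the geometric computation is \emph{verbatim} the one in \reflem{diamac}: starting from the horoball $H_{1,\infty}$ of height $1$, the isometric sphere of $g$ meets the point at distance $d$ from $H_{1,\infty}$ at its top, with $p=\exp(-d)$; the image horoball $g(H_{1,\infty})=H_{q,\zeta}$ then has $q=\exp(-2d)=p^{2}$; and since the isometric sphere has diameter $\sqrt{\abs{\vec{a}\,\vec{b}}}$, its top sits at height $\sqrt{\abs{\vec{a}\,\vec{b}}}$, forcing $q=\abs{\vec{a}\,\vec{b}}=ab$. Hence the horoball about $\zeta$ has Euclidean diameter $ab$, the product of the lengths of the two hexagon vectors adjacent to $\zeta$. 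I would then remark, exactly as in the paragraph following \reflem{diamac}, that the hexagon's symmetry gives a horoball $H_{ab,-\zeta}$ about $-\zeta$ of the same diameter, to be used later.

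The only genuine obstacle is bookkeeping: one must correctly identify \emph{which} of the three occurrences of the boundary face around the hexagon pairs with the edge terminating at $\zeta$, and hence get the right constant term in $g$ — it is easy to accidentally reproduce the pairing that yields $ac$ again, or to land on $bc$. I would pin this down by tracking the cyclic order $-1,\zeta,\zeta',1,-\zeta,-\zeta'$ of vertices on the hexagon (\reffig{GSHex}) together with the $L$-direction assumption for $\Delta_{i+1}$, checking that the vertex $\zeta$ is flanked by the edges carrying $\vec{a}$ and $\vec{b}$, and verifying that $g$ indeed maps the designated face to its designated image (it suffices to check $g$ on the two finite vertices of that edge). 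Everything after that identification is the same one-variable integral as before, so no new estimates are needed.
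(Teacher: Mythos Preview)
Your route differs from the paper's. The paper does not construct a new covering transformation at all; instead it applies a Euclidean similarity $r$ taking $\pm\zeta$ to $\pm 1$, observes that the argument of \reflem{diamac} was independent of which vertical edge was chosen, and reruns that argument verbatim with $r(\vec a),r(\vec b)$ now playing the role of the two adjacent vectors. Scaling back by $r^{-1}$ gives diameter $ab$. This sidesteps ever having to name the deck transformation sending $\infty$ to $\zeta$.

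Your direct approach can be made to work, but the displayed formula $g(u)=\zeta-\vec a\,\vec b/(u+1)$ is wrong, and the error is more than ``checking the constant''. A map with pole at $-1$ would send the vertical edge over $-1$ (a lift of the edge $\alpha$) to the vertical edge over $\zeta$ (a lift of the distinct edge $\beta$); since $\alpha$ and $\beta$ are different edges of the once-punctured torus, no deck transformation does this. The covering transformation you actually want carries the copy of face~A at $(-\zeta,-\zeta',\infty)$ to the copy at $(-1,\zeta,\infty)$ via $-\zeta\mapsto\infty$, $-\zeta'\mapsto -1$, $\infty\mapsto\zeta$, namely
\[
g(u)\;=\;\zeta+\frac{\vec a\,\vec b}{u+\zeta}.
\]
Its isometric sphere does have radius $\sqrt{ab}$, and from there your integral computation goes through and yields diameter $ab$. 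So the obstacle you flagged as bookkeeping is real and slightly sharper than you suggest: the pole of $g$ is forced to lie at $-\zeta$, not $-1$, and placing it at $-1$ produces a map that is not a covering transformation at all. The paper's conjugation trick is precisely what lets one avoid this pitfall.
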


\begin{proof}
The argument above was written for the edge $\alpha$, but note it is independent of the choice of $\alpha$. We can rotate and scale by an isometry $r$ such that the vertices $-\zeta$ and $\zeta$ (the endpoints of $\beta$) are sent to $-1$ and $1$. Then $f$ will have the form \[ f: u \mapsto 1 - \dfrac{r(\vec{a})r(\vec{b})}{u + 1} \] where $r(\vec{a})$ and $r(\vec{b})$ are now the vectors adjacent to $\beta$. As above, this sends $H_{1, \infty}$ to $H_{\abs{r(\vec{a}) r(\vec{b})}, 1}$. Now rotate and scale back via $r^{-1}$ so that $1 \mapsto \zeta, r(\vec{a}) \mapsto \vec{a}$ and $r(\vec{b}) \mapsto \vec{b}$. It follows that the horoball about the vertex $\zeta$ of the hexagon has Euclidean diameter $ab$.
\end{proof}

By symmetry of the hexagon, the horoball at the end of the edge $\beta$ with vertices $\zeta$ and $\infty$ has the same diameter as the one at the end of the other edge labelled $\beta$. So there exists a horoball $H_{ab, -\zeta}$ about $-\zeta$ with diameter $ac$. A similar argument as above gives $H_{bc, \zeta'}$ and $H_{bc, -\zeta'}$. 

The horoballs about the vertices $-1, \zeta, \zeta', 1, -\zeta, -\zeta'$ of the hexagon are respectively as follows

\[ H_{ac, -1}, \quad H_{ab, \zeta}, \quad H_{bc, \zeta'}, \quad H_{ac, 1}, \quad H_{ab, -\zeta}, \quad H_{bc, -\zeta'}. \] 

To prove geometric canonicity at the face $(\zeta\zeta'\infty)$ of the hexagon, we set up the convexity inequality of \refprop{LocalConvexity} accordingly. Consider the horoballs centred at the vertices $\zeta, \zeta', \infty$ of this face, and the horoballs adjacent to this face centred at $-1$ and $1$; these are given by

\[ H_{ab, \zeta}, \quad H_{bc, \zeta'}, \quad H_{1, \infty}, \quad H_{ac, -1}, \quad H_{ac, 1}. \] 

Let us now make our move to Minkowski space.

\begin{lemma}
Let $\zeta = \xi + i\eta$ and $\zeta' = \xi' + i\eta'$ with $\xi, \xi', \eta, \eta' \in \mathbb{R}$, then the isotropic vectors corresponding to the horoballs above are respectively 
\[ \begin{array}{ccccccccc}
\vspace{2mm}
v_\zeta & = & \frac{1}{ab} & ( & 2\xi, & 2\eta, & 1 - \abs{\zeta}^2, & 1 + \abs{\zeta}^2 & ) \\ \vspace{2mm}
v_{\zeta'} & = & \frac{1}{bc} & ( & 2\xi', & 2\eta', & 1 - \abs{\zeta'}^2, & 1 + \abs{\zeta'}^2 & ) \\ \vspace{2mm}
v_\infty & = & & ( & 0, & 0, & -1, & 1 & ) \\ \vspace{2mm}
v_{-1} & = & \frac{1}{ac} & ( & -2, &  0, &  0, & 2 & ) \\ \vspace{2mm}
v_{1} & = & \frac{1}{ac} & ( & 2, & 0, & 0, & 2 & ). \\
\end{array} \]
\end{lemma}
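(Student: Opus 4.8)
The plan is to apply, one horoball at a time, the dictionary between horoballs in the upper half-space model of $\mathbb{H}^3$ and isotropic vectors in the Minkowski space model recorded in equation~\eqref{Eq:IsoVec}. Recall that a horoball $H_{d,\zeta}$ of Euclidean diameter $d$ centred at $\zeta = \xi + i\eta$ corresponds to the isotropic vector $v_{d,\zeta} = \frac{1}{d}(2\xi, 2\eta, 1 - \abs{\zeta}^2, 1 + \abs{\zeta}^2)$, while the horoball $H_{h,\infty}$ of Euclidean height $h$ about $\infty$ corresponds to $(0,0,-h,h)$. So once the diameters and centres are known, the lemma is pure substitution.

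First I would assemble the data already established: by \reflem{diamac} and \reflem{diamab}, together with the symmetric argument producing $H_{bc,\zeta'}$ and the normalisation fixing the initial horoball $H_{1,\infty}$ of height $1$ about $\infty$ at the start of the section, the five horoballs centred at the vertices $\zeta,\zeta',\infty$ of the face $(\zeta\zeta'\infty)$ and at the two adjacent vertices $-1,1$ are $H_{ab,\zeta}$, $H_{bc,\zeta'}$, $H_{1,\infty}$, $H_{ac,-1}$, $H_{ac,1}$. Here $\zeta = \xi + i\eta$ and $\zeta' = \xi' + i\eta'$, while the vertices $-1$ and $1$ have vanishing imaginary part, real part $\mp 1$, and modulus $1$.

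Then I would substitute into \eqref{Eq:IsoVec}. The rows for $v_\zeta$ and $v_{\zeta'}$ are immediate. For $v_\infty$, the $\infty$ case of the dictionary with $h = 1$ gives $(0,0,-1,1)$. For $v_{-1}$ and $v_1$, plugging in $\xi = \mp 1$, $\eta = 0$, $\abs{\zeta}^2 = 1$ makes the third coordinate $1 - \abs{\zeta}^2$ vanish and the fourth coordinate $1 + \abs{\zeta}^2$ equal to $2$, yielding $\frac{1}{ac}(-2,0,0,2)$ and $\frac{1}{ac}(2,0,0,2)$. There is no genuine obstacle here: all of the geometric content sits in the diameter computations of \reflem{diamac}, \reflem{diamab} and their $bc$-analogue, which are already done, so this lemma merely packages their output in the form needed to run the convexity test of \refprop{LocalConvexity} in the next step.
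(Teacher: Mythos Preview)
Your proposal is correct and matches the paper's own proof, which is a one-line appeal to equation~\eqref{Eq:IsoVec}: ``One can easily check the above correspondence holds using equation~\eqref{Eq:IsoVec} in \refsec{Mink}.'' You have simply spelled out that substitution explicitly, case by case.
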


\begin{proof}
One can easily check the above correspondence holds using equation~\eqref{Eq:IsoVec} in \refsec{Mink}.
\end{proof}

\begin{lemma}\label{Lem:A+C/2}
Assume that $\lambda v_\zeta + \mu v_{\zeta'} + \nu v_\infty = \rho v_1 + (1 - \rho)v_{-1}$. Then $\lambda + \mu + \nu > 1$ if and only if \[ -\pi < \dfrac{A + C}{2} < 0. \]
\end{lemma}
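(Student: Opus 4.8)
The plan is to turn the vector identity $\lambda v_\zeta + \mu v_{\zeta'} + \nu v_\infty = \rho v_1 + (1-\rho) v_{-1}$ into a system of four scalar equations (one per coordinate in $\mathbb{M}^4$), solve for $\lambda, \mu, \nu$ (and $\rho$) in terms of $a,b,c$ and the angles $A,B,C$, and then read off when $\lambda + \mu + \nu > 1$. First I would record the four coordinate equations. The first coordinate gives $\frac{2\xi}{ab}\lambda + \frac{2\xi'}{bc}\mu = \frac{2}{ac}(2\rho - 1)$; the second gives $\frac{2\eta}{ab}\lambda + \frac{2\eta'}{bc}\mu = 0$; the third gives $\frac{1-|\zeta|^2}{ab}\lambda + \frac{1-|\zeta'|^2}{bc}\mu - \nu = 0$; and the fourth gives $\frac{1+|\zeta|^2}{ab}\lambda + \frac{1+|\zeta'|^2}{bc}\mu + \nu = \frac{2}{ac}$. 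Adding the third and fourth equations eliminates $\nu$ and yields $\frac{2}{ab}\lambda + \frac{2}{bc}\mu = \frac{2}{ac}$, i.e. $\frac{c\lambda + a\mu}{abc} = \frac{1}{ac}$, so $c\lambda + a\mu = b$. Subtracting them gives $\nu$ in terms of $\lambda, \mu$ and $|\zeta|^2, |\zeta'|^2$.

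Next I would extract the geometric content of the hexagon relations $\zeta + 1 = \vec a$, $\zeta' - \zeta = \vec b$, $1 - \zeta' = \vec c$, which in particular give $\vec a + \vec b + \vec c = 2$, hence $a\cos A + b\cos B + c\cos C = 2$ and $a\sin A + b\sin B + c\sin C = 0$. I would also want to express $\xi = \operatorname{Re}(\zeta) = \operatorname{Re}(\vec a) - 1 = a\cos A - 1$, $\eta = a\sin A$, and similarly $\xi' = 1 - c\cos C$, $\eta' = c\sin C$ (and $|\zeta|^2 = \xi^2 + \eta^2$, etc.). Substituting into the second coordinate equation, $\frac{\eta}{ab}\lambda + \frac{\eta'}{bc}\mu = 0$ becomes $\frac{\sin A}{b}\lambda + \frac{\sin C}{b}\mu = 0$, i.e. $\lambda \sin A + \mu \sin C = 0$ (after multiplying through; note $\eta/(ab) = \sin A / b$ and $\eta'/(bc) = \sin C / b$). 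Combining with $c\lambda + a\mu = b$, I can solve the linear system for $\lambda$ and $\mu$: from $\lambda \sin A = -\mu \sin C$ one gets $\lambda = \frac{b \sin C}{c \sin C - a \sin A}$ and $\mu = \frac{-b\sin A}{c\sin C - a\sin A}$ (up to checking signs and the non-degeneracy of the determinant). Then $\lambda + \mu = \frac{b(\sin C - \sin A)}{c\sin C - a\sin A}$, and I still need $\nu$, which comes from the third/fourth difference combined with the explicit formulas for $|\zeta|^2$ and $|\zeta'|^2$ in terms of $a, c, A, C$; the modulus-squared terms should simplify using $|\zeta|^2 = |\vec a - 1|^2 = a^2 - 2a\cos A + 1$ and $|\zeta'|^2 = |1 - \vec c|^2 = 1 - 2c\cos C + c^2$.

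The crux is the final simplification: after assembling $\lambda + \mu + \nu$ as a single rational expression in $a, b, c, A, B, C$, I expect it to collapse — using $\vec a + \vec b + \vec c = 2$ to eliminate $b, B$ — to something whose sign is governed precisely by the sign of a quantity like $\sin\!\left(\frac{A+C}{2}\right)$ or $\cos\!\left(\frac{A+C}{2}\right)$ together with a sum-to-product identity such as $\sin C - \sin A = 2\cos\!\left(\frac{A+C}{2}\right)\sin\!\left(\frac{C-A}{2}\right)$ and $c\sin C - a\sin A$ handled similarly. I anticipate that $\lambda + \mu + \nu - 1$ factors as a positive quantity times $-\sin\!\left(\frac{A+C}{2}\right)$ (or equivalent), so that $\lambda + \mu + \nu > 1$ exactly when $\frac{A+C}{2} \in (-\pi, 0)$ modulo the range restrictions on $A, C$ coming from the hexagon being embedded. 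The main obstacle is precisely this trigonometric bookkeeping: tracking the correct branch of each angle (the $A,B,C$ are only defined mod $2\pi$, and the geometry of the layered-solid-torus hexagon pins down which half-plane each vector lies in), and verifying that the denominator $c\sin C - a\sin A$ does not vanish and has a definite sign, so that multiplying the inequality through by it is legitimate. I would handle this by appealing to the concrete picture of the hexagon in \reffig{GSHex} — the vectors $\vec a, \vec b, \vec c$ turn around as one traverses the hexagon boundary — to fix the signs, and then the equivalence with $-\pi < \frac{A+C}{2} < 0$ follows.
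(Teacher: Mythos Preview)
Your plan is exactly the paper's approach: solve the $4\times 4$ system, write $\lambda+\mu+\nu-1$ as a single expression, and reduce its sign to a trigonometric condition using the hexagon geometry. One sign slip to fix: from $1-\zeta'=\vec c$ you get $\eta'=-c\sin C$, not $c\sin C$; the paper records $\sin A=\eta/a$, $\sin B=(\eta'-\eta)/b$, $\sin C=-\eta'/c$.

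The step you flag as the ``main obstacle'' is indeed the heart of the proof, and you should not leave it as an expectation. The paper organises it as follows: write $\lambda+\mu+\nu=1+\dfrac{Z}{ac(\eta'-\eta)}$ with $\eta'-\eta>0$ by the counterclockwise orientation of $-1\zeta\zeta'$ and $1\zeta'\zeta$; use the identities $1-|\zeta|^2=\vec a\cdot(\vec b+\vec c)$ and $1-|\zeta'|^2=(\vec a+\vec b)\cdot\vec c$ to rewrite $Z$; and then factor
\[
Z=-4abc\,\sin\frac{A+C}{2}\,\cos\frac{B-A}{2}\,\cos\frac{B-C}{2}.
\]
The ``positive quantity'' you anticipate is therefore $4abc\cos\frac{B-A}{2}\cos\frac{B-C}{2}$, and its positivity is not automatic: it comes from the orientation constraints $0<B-A<\pi$ and $0<B-C<\pi$, together with the branch choice $0<B<\pi$ and $B-\pi<A,C<B$ forced by $\vec a+\vec b+\vec c=2$. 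Once those are established, $Z>0$ is equivalent to $-\pi<\frac{A+C}{2}<0$.
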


\begin{proof}
Solving the $4 \times 4$ system of equations given by $\lambda v_\zeta + \mu v_{\zeta'} + \nu v_\infty = \rho v_1 + (1 - \rho)v_{-1}$ yields the unique solution \[ \lambda = \dfrac{b\eta'}{c(\eta' - \eta)}, \quad \mu = \dfrac{-b\eta}{a(\eta' - \eta)}, \quad \nu = \dfrac{\eta'(1 - \abs{\zeta}^2) - \eta(1 - \abs{\zeta'}^2)}{ac(\eta' - \eta)}. \] Note that we omit $\rho$ because it is not required. Then \[ \lambda + \mu + \nu = 1 + \dfrac{Z}{ac(\eta' - \eta)}, \text{ where } Z = ab\eta' - bc\eta + \eta'(1 - \abs{\zeta}^2) - \eta(1 - \abs{\zeta'}^2) - ac(\eta' - \eta). \] Since the triangles $-1\zeta\zeta'$ and $1\zeta'\zeta$ are counterclockwise oriented, we have $\eta' > \eta$. Thus it remains to show that $Z>0$ if and only if \[ -\pi < \dfrac{A + C}{2} < 0. \]

It is easy to verify that $1 - \abs{\zeta}^2 = \vec{a} \cdot (\vec{b}+\vec{c})$ and $1 - \abs{\zeta'}^2 = (\vec{a} + \vec{b})\cdot \vec{c}$, where the operation $\cdot$ denotes the standard dot product between two vectors. Therefore 
\begin{align*}
Z &= \eta'(ab + \vec{a} \cdot \vec{b}) - \eta(bc +\vec{b} \cdot \vec{c}) - (\eta' - \eta)(ac - \vec{a} \cdot \vec{c}) \\
&= abc\left(\dfrac{\eta'}{c}(1 + \cos (A - B)) - \dfrac{\eta}{a}(1 + \cos (B - C)) - \dfrac{\eta' - \eta}{b}(1 - \cos (A - C))\right).
\end{align*}
Observe that $\sin C = -\dfrac{\eta'}{c}, \; \sin A = \dfrac{\eta}{a}$ and $\sin B = \dfrac{\eta' - \eta}{b}$. Hence 
\begin{align*}
Z &= -abc\left(\sin C (1 + \cos (A - B)) + \sin A (1 + \cos (B - C)) + \sin B (1 - \cos (A - C))\right) \\
&= -4abc \; \sin \dfrac{A + C}{2} \cos \dfrac{B - A}{2} \cos \dfrac{B - C}{2}.
\end{align*}

One can check that the last line follows using standard trigonometric formulae. Recall that the angles $A, B$ and $C$ were defined modulo $2\pi$; pick $B$ to be the smallest positive representative. The fact that the triangles $-1\zeta\zeta'$ and $1\zeta'\zeta$ are counterclockwise oriented implies $0 < B < \pi$. 

Since the triangle $1\zeta'\zeta$ is counterclockwise oriented, we must have $0 < B - A < \pi$. This is easy to check: if $B - A$ is equal to $0$ or $\pi$, then the vectors $\vec{a}$ and $\vec{b}$ are parallel; if $B - A < 0$ or $B - A > \pi$, the triangle $1\zeta'\zeta$ is  clockwise oriented. It follows that \[\cos \dfrac{B - A}{2} > 0.\] 

Since the triangle $-1\zeta\zeta'$ is counterclockwise oriented, we must have $0 < B - C < \pi$. Therefore \[\cos \dfrac{B - C}{2} > 0.\] The restrictions $0 < B - A < \pi$ and $0 < B - C < \pi$ imply that we can pick $B - \pi < A, C < B$. It is easy to see from the cusp diagram that $\vec{a} + \vec{b} + \vec{c} = 2$, so we must have 
\begin{equation}\label{Eq:CuspAngles}
-\pi < \min{A, C} < B <\pi, \quad \text{where} \quad B - \pi < A, C < B,
\end{equation} 
for otherwise if $A, C > 0$, then $\arg(\vec{a} + \vec{b} + \vec{c}) > 0$, which is a contradiction. Therefore $Z>0$ if and only if \[ -\pi < \dfrac{A + C}{2} < 0. \] The result follows.
\end{proof}

To prove geometric canonicity at the face $(\zeta\zeta'\infty)$, we require the notion of \emph{handedness}. Since handedness is only employed a few times, we keep the discussion brief. The reader is referred to Section~3 of Gu\'{e}ritaud--Schleimer \cite{GueritaudSchleimer}. 

\begin{definition} 
The \emph{handedness} of $g \in \text{GL}_2(\mathbb{C})$ is defined as follows
\[ \text{hand}(g) = \dfrac{(\text{tr}\,g)^2}{\text{Det}\,g}. \]
\end{definition}

When a loxodromy of $\mathbb{H}^3$ is conjugate to $z \mapsto \alpha z$ with $\abs{\alpha} > 1$ and $\text{Im}(\alpha) > 0$, we say it is \emph{left-handed}. One can easily verify that the M\"{o}bius transformation corresponding to $g$ is left-handed if and only if $\text{Im}(\text{hand}(g))$ is positive. Gu\'{e}ritaud and Schleimer prove the following result in \cite[Proposition~23]{GueritaudSchleimer}.

\begin{lemma}\label{Lem:MobLeft}
The M\"{o}bius transformation defined earlier by \[ f: u \mapsto \dfrac{\vec{a}\vec{c}}{u + 1} \] is left-handed.
\end{lemma}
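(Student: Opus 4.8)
The plan is to read off a matrix representative of $f$, apply the handedness formula, reduce left-handedness to a single inequality among the hexagon angles, and then verify that inequality from the orientation data already assembled in the proof of \reflem{A+C/2}.

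Writing $f\colon u \mapsto \dfrac{0\cdot u + \vec{a}\vec{c}}{1\cdot u + 1}$, a representative in $\mathrm{GL}_2(\CC)$ is $g = \begin{pmatrix} 0 & \vec{a}\vec{c} \\ 1 & 1 \end{pmatrix}$, which is invertible since $\vec{a},\vec{c}\neq 0$. As a non-trivial covering transformation of a hyperbolic solid torus, $f$ is neither elliptic (the manifold is torsion-free) nor parabolic (its fixed-point equation $u^2+u-\vec{a}\vec{c}=0$ has non-zero discriminant for the tetrahedron shapes occurring here), hence loxodromic; so the criterion ``left-handed $\iff \text{Im}(\text{hand})>0$'' recalled above applies.

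Now $\text{hand}(f) = \dfrac{(\text{tr}\,g)^2}{\text{Det}\,g} = \dfrac{1}{-\vec{a}\vec{c}} = -\dfrac{1}{\vec{a}\vec{c}}$, and $\text{Im}\!\left(-\dfrac{1}{\vec{a}\vec{c}}\right) = \dfrac{\text{Im}(\vec{a}\vec{c})}{\abs{\vec{a}\vec{c}}^2}$, so $f$ is left-handed precisely when $\text{Im}(\vec{a}\vec{c})>0$. Writing $\vec{a} = a\exp(iA)$ and $\vec{c} = c\exp(iC)$ with $a,c>0$ gives $\vec{a}\vec{c} = ac\exp(i(A+C))$, whence $\text{Im}(\vec{a}\vec{c}) = ac\,\sin(A+C)$; so the lemma reduces to the single inequality $\sin(A+C)>0$.

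It remains to prove $\sin(A+C)>0$, and this is the step that carries the content. I would use exactly the constraints assembled in the proof of \reflem{A+C/2}: the counterclockwise orientation of the triangles $-1\zeta\zeta'$ and $1\zeta'\zeta$ forces $0<B-A<\pi$, $0<B-C<\pi$, and $0<B<\pi$; the relation $\vec{a}+\vec{b}+\vec{c}=2$ (a positive real) excludes $A,C$ being both positive, giving the configuration~\eqref{Eq:CuspAngles}; and the normalisation that $\Delta_{i+1}$ is layered on in the $L$ direction (cf.\ \reffig{GSHex}) fixes the sign of the new angle, selecting which side of the relevant $\pi$-interval $A+C$ lies on. \textbf{The main obstacle is this sign bookkeeping}: the inequalities in \eqref{Eq:CuspAngles} by themselves only trap $A+C$ in an interval that straddles a zero of $\sin$, so the argument genuinely needs the $L$-direction hypothesis — equivalently, the positive orientation of the layered tetrahedra — to rule out the wrong sign, and one must be careful that this is consistent with the orientation conventions chosen for $\zeta$ and $\zeta'$ in \reffig{GSHex}.
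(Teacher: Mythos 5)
Your attempt fails at the very first step, before the real work begins. The map ``defined earlier'' in the paper is the face pairing $f\colon u \mapsto 1 + \dfrac{(\zeta+1)(\zeta'-1)}{u+1} = 1 - \dfrac{\vec{a}\vec{c}}{u+1}$ (it must send $\infty\mapsto 1$, $-1\mapsto\infty$, $\zeta\mapsto\zeta'$); the constant term is simply dropped in the lemma's displayed formula. You took that display literally and used the matrix $\begin{pmatrix} 0 & \vec{a}\vec{c}\\ 1 & 1\end{pmatrix}$, getting $\text{hand}(f)=-1/(\vec{a}\vec{c})$ and the criterion ``left-handed $\iff \text{Im}(\vec{a}\vec{c})>0$, i.e.\ $\sin(A+C)>0$.'' The correct representative is $\begin{pmatrix} 1 & 1-\vec{a}\vec{c}\\ 1 & 1\end{pmatrix}$, with $\text{hand}(f)=4/\vec{a}\vec{c}$ --- which is exactly the value used in \reflem{AC} --- and then left-handedness is equivalent to $\text{Im}(\vec{a}\vec{c})<0$, i.e.\ $A+C\in(-\pi,0)$ modulo $2\pi$. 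So the inequality you have reduced the lemma to has the wrong sign: it contradicts the conclusion $-\pi<A+C<0$ that this lemma is supposed to feed into \reflem{AC}, and in the actual geometry it is false (for the literal map $u\mapsto \vec{a}\vec{c}/(u+1)$ one would in fact get a right-handed loxodromy). As set up, the proof cannot be completed. (Your argument that $f$ is loxodromic is also only a gesture --- ``non-zero discriminant for the tetrahedron shapes occurring here'' is not established --- but that is minor by comparison.)

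Independently of the sign error, the step that carries all of the content is not actually carried out: your last paragraph is a plan, and you yourself flag the ``sign bookkeeping'' as unresolved. The data assembled in \reflem{A+C/2} --- $0<B<\pi$, $0<B-A<\pi$, $0<B-C<\pi$, $\vec{a}+\vec{b}+\vec{c}=2$, i.e.\ \eqref{Eq:CuspAngles} --- only confines $A+C$ to an interval ($-2\pi<A+C<\pi$) on which $\sin(A+C)$ takes both signs, so no amount of single-hexagon orientation bookkeeping, with or without the $L$-direction convention, can pin down the sign; if it could, \reflem{MobLeft} would be redundant in the paper's own logic. Determining that sign is precisely the content of the result, and the paper does not prove it at all: it imports it from Gu\'eritaud--Schleimer (Proposition~23 of \cite{GueritaudSchleimer}), where it is obtained by a global argument tracking handedness through the layered triangulation using the positively oriented tetrahedra, not by a computation in one hexagon. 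So a correct self-contained proof would have to reproduce an argument of that kind; your proposal both aims at the wrong inequality and leaves the essential step unproven.
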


\begin{lemma}\label{Lem:AC}
The angles $A, C$ of the vectors $\vec{a}, \vec{c}$ of the hexagon satisfy
\[ -\pi < \dfrac{A + C}{2} < 0. \]
\end{lemma}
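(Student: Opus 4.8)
The plan is to deduce Lemma~\ref{Lem:AC} from the handedness computation in Lemma~\ref{Lem:MobLeft} together with the algebraic characterization in Lemma~\ref{Lem:A+C/2}. The point is that the quantity $(A+C)/2$ controls two things at once: the sign of the defect $Z$ in the convexity inequality (by Lemma~\ref{Lem:A+C/2}), and the handedness of the face-pairing M\"{o}bius transformation $f$. So the strategy is to compute $\mathrm{hand}(f)$ explicitly in terms of $\vec{a}=a\exp(iA)$ and $\vec{c}=c\exp(iC)$, take its imaginary part, and read off the constraint on $A+C$ from the fact that $\mathrm{Im}(\mathrm{hand}(f))>0$.

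First I would write $f\from u\mapsto 1-\dfrac{\vec{a}\vec{c}}{u+1}$ as an element $g\in\mathrm{GL}_2(\mathbb{C})$, e.g.\ $g=\begin{pmatrix} 1 & 1-\vec{a}\vec{c}\\ 1 & 1\end{pmatrix}$ (or the normalized parabolic-composition form actually used to prove Lemma~\ref{Lem:MobLeft}), and compute $\mathrm{hand}(g)=(\mathrm{tr}\,g)^2/\mathrm{Det}\,g = 4/\bigl(1-(1-\vec{a}\vec{c})\bigr)=4/(\vec{a}\vec{c})$. Since $\vec{a}\vec{c}=ac\exp\bigl(i(A+C)\bigr)$ with $a,c>0$, we get $\mathrm{hand}(g)=\dfrac{4}{ac}\exp\bigl(-i(A+C)\bigr)$, so $\mathrm{Im}(\mathrm{hand}(g))=-\dfrac{4}{ac}\sin(A+C)$. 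By Lemma~\ref{Lem:MobLeft}, $f$ is left-handed, hence $\mathrm{Im}(\mathrm{hand}(g))>0$, which forces $\sin(A+C)<0$, i.e.\ $A+C\in(-2\pi,-\pi)\cup(0,\pi)\pmod{2\pi}$. Equivalently $\dfrac{A+C}{2}\in(-\pi,-\tfrac{\pi}{2})\cup(0,\tfrac{\pi}{2})\pmod\pi$; the task is then to rule out the wrong component and pin down the representative.

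To finish I would invoke the geometric normalization already established in the proof of Lemma~\ref{Lem:A+C/2}: the orientation conditions give $0<B<\pi$ with $B-\pi<A,C<B$, and $\vec{a}+\vec{b}+\vec{c}=2$ forces $\min\{A,C\}<0$ (the displayed inequality~\eqref{Eq:CuspAngles}). Together with $A,C>B-\pi>-\pi$ this bounds $A+C\in(-2\pi,\,2B)$ and in particular $A+C<\pi$ when $B<\pi/2$; more carefully, combining $A+C\equiv(-2\pi,-\pi)\cup(0,\pi)$ with $-2\pi<A+C<2B<2\pi$ and $\min\{A,C\}<0<B$ leaves only $-2\pi<A+C<0$, hence $-\pi<\dfrac{A+C}{2}<0$, which is exactly the claim. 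So the structure is: (i) compute $\mathrm{hand}(f)$; (ii) convert left-handedness into $\sin(A+C)<0$; (iii) use the cusp-diagram constraints from Lemma~\ref{Lem:A+C/2} to select the correct branch.

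The main obstacle I anticipate is branch bookkeeping, not any substantive geometry: the angles $A,B,C$ are only defined mod $2\pi$, $\sin(A+C)<0$ determines $A+C$ only mod $2\pi$, and one must be careful that the representative of $A+C$ selected by the $\vec{a}+\vec{b}+\vec{c}=2$ argument is the same one on which the inequality $-\pi<(A+C)/2<0$ is being asserted. I would handle this by fixing representatives once and for all exactly as in the proof of Lemma~\ref{Lem:A+C/2} ($B$ the smallest positive representative, $B-\pi<A,C<B$), so that $A+C$ lies in a length-$2\pi$ window and the sign condition $\sin(A+C)<0$ then isolates a unique subinterval, which the constraint $\min\{A,C\}<0$ identifies as $(-2\pi,0)$. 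Once the bookkeeping is set up, the conclusion is immediate. Note also that Lemma~\ref{Lem:A+C/2} is not logically needed to prove Lemma~\ref{Lem:AC} itself — it is the reason we \emph{want} Lemma~\ref{Lem:AC} — but its normalization of $A,B,C$ is reused here.
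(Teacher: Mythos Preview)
Your overall strategy is exactly the paper's: compute $\mathrm{hand}(f)=4/(\vec a\,\vec c)$, use left-handedness to get a sign condition on $A+C$, and then use the normalizations from the proof of Lemma~\ref{Lem:A+C/2} to pick out the correct representative. However, the execution has a genuine error precisely at the ``branch bookkeeping'' step you flagged as the obstacle.

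Concretely, from $\mathrm{Im}(\mathrm{hand}(f))=-\tfrac{4}{ac}\sin(A+C)>0$ you correctly obtain $\sin(A+C)<0$, but you then write ``i.e.\ $A+C\in(-2\pi,-\pi)\cup(0,\pi)\pmod{2\pi}$''. That is the set where $\sin>0$; the correct conclusion is $A+C\in(-\pi,0)\pmod{2\pi}$, which is what the paper records as $\mathrm{Im}(\vec a\,\vec c)<0$. Your subsequent branch-selection step then does not go through as written: from $B-\pi<A,C<B$ and $\min\{A,C\}<0$ you only get $A+C<0+B<\pi$ (together with $A+C>2(B-\pi)>-2\pi$), not $A+C<0$; so intersecting your (incorrect) set $(-2\pi,-\pi)\cup(0,\pi)$ with $(-2\pi,\pi)$ does not yield $(-2\pi,0)$ as you assert. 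The throwaway clause ``$A+C<\pi$ when $B<\pi/2$'' is also unjustified, since nothing forces $B<\pi/2$.

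Once the sign slip is fixed, the argument is one line and coincides with the paper's: $\sin(A+C)<0$ gives $A+C\in(-\pi,0)\pmod{2\pi}$, while the normalization \eqref{Eq:CuspAngles} yields $-2\pi<A+C<\pi$ (from $A,C>B-\pi>-\pi$ and $\min\{A,C\}<0$, $\max\{A,C\}<B<\pi$). The intersection is $A+C\in(-\pi,0)$, hence $-\pi<\tfrac{A+C}{2}<0$ (in fact the stronger $-\tfrac{\pi}{2}<\tfrac{A+C}{2}<0$).
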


\begin{proof}
Consider again the M\"{o}bius transformation $f: u \mapsto \vec{a}\vec{c}/(u + 1)$. By the definition of handedness, we have $\text{hand}(f) = 4/\vec{a}\vec{c}$. By \reflem{MobLeft}, we know that $f$ is left-handed, therefore \[\text{Im}(\text{hand}(g)) = \dfrac{-4\text{Im}(\vec{a}\vec{c})}{\abs{\vec{a}\vec{c}}^2} > 0,\] which implies that  $\text{Im}(\vec{a}\vec{c}) < 0$ or $A + C \in (-\pi, 0)$ modulo $2\pi$. But in the proof of \reflem{A+C/2}, we show that $-2\pi < A + C < \pi$. Hence $-\pi < A + C < 0$ and so \[ -\pi < \dfrac{A + C}{2} < 0. \qedhere\] 
\end{proof}

The next result follows immediately from \reflem{A+C/2} and \reflem{AC}.

\begin{lemma}\label{Lem:FaceHex}
Geometric canonicity holds at the face $(\zeta\zeta'\infty)$ of the hexagon.
\end{lemma}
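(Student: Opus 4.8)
The plan is to combine the two preceding lemmas directly, since \reflem{FaceHex} is precisely the statement that the hypothesis of the local convexity criterion from \reflem{A+C/2} is satisfied. Concretely, \reflem{A+C/2} established the equivalence: for the face $(\zeta\zeta'\infty)$ of the hexagon, with the adjacent horoballs centred at $1$ and $-1$ playing the roles of $P$ and $Q$ in \refprop{LocalConvexity}, we have $\lambda + \mu + \nu > 1$ if and only if $-\pi < (A+C)/2 < 0$. Meanwhile \reflem{AC} establishes the inequality $-\pi < (A+C)/2 < 0$ unconditionally, using the handedness computation of \reflem{MobLeft} together with the angle bound $-2\pi < A + C < \pi$ coming from the orientation analysis inside the proof of \reflem{A+C/2}. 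So the only work is to observe that the decomposition $\lambda v_\zeta + \mu v_{\zeta'} + \nu v_\infty = \rho v_1 + (1-\rho) v_{-1}$ arising from solving the $4\times 4$ linear system is exactly the expression $\rho P + (1-\rho) Q = \sum \lambda_i A_i$ demanded by \refprop{LocalConvexity}, with $\sigma = 3$, $A_1 = v_\zeta$, $A_2 = v_{\zeta'}$, $A_3 = v_\infty$, $P = v_1$, $Q = v_{-1}$, $(\lambda_1,\lambda_2,\lambda_3) = (\lambda,\mu,\nu)$.

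First I would recall the setup: the face $(\zeta\zeta'\infty)$ of the hexagon lies in the lifted decomposition $\tilde{\mathcal D}$, its vertex horoballs correspond to the isotropic vectors $v_\zeta, v_{\zeta'}, v_\infty$, and the two faces adjacent to it across the hexagon — coming from the neighbouring tetrahedra $\Delta_i$ and $\Delta_{i+1}$ — have their ``opposite'' vertices at the horoballs centred at $1$ and $-1$, corresponding to $v_1, v_{-1}$. Next I would note that by \reflem{A+C/2} the linear system has a unique solution (exhibited explicitly there), so $\rho \in (0,1)$ is determined; one should check $\rho \in (0,1)$, but this is automatic from the geometry, since $v_1$ and $v_{-1}$ sit on opposite sides of the plane spanned by the face and the combination expressing a genuine point of $\tilde{\mathcal D}$. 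Then I would invoke \reflem{AC} to conclude $-\pi < (A+C)/2 < 0$, hence by the equivalence in \reflem{A+C/2} that $\lambda + \mu + \nu > 1$. By \refprop{LocalConvexity}, $\tilde{\mathcal D}$ is locally convex at the face $(\zeta\zeta'\infty)$, which is exactly the assertion that geometric canonicity holds at that face.

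I do not expect a genuine obstacle here — this lemma is a bookkeeping corollary. The one subtlety worth a sentence is confirming the identification of $P$ and $Q$: one must be sure that the faces adjacent to $(\zeta\zeta'\infty)$ really are the triangles $-1\zeta\zeta'$ and $1\zeta'\zeta$ (so that $P = v_1$, $Q = v_{-1}$), which is read off from the hexagon picture in \reffig{GSHex} and the fact that $\Delta_{i+1}$ is layered in the $L$ direction; this was already used implicitly in setting up the inequality before \reflem{A+C/2}. With that in hand the proof is a two-line deduction, so the write-up should simply say: combine \reflem{A+C/2} and \reflem{AC} with \refprop{LocalConvexity}. If a slightly fuller argument is wanted, I would spell out that the displayed solution $(\lambda,\mu,\nu)$ from \reflem{A+C/2} together with the accompanying value of $\rho$ furnishes the data required by \refprop{LocalConvexity}, and that $\lambda+\mu+\nu>1$ is the content of the conjunction of the two lemmas.
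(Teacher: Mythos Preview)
Your proposal is correct and matches the paper's own proof essentially line for line: the paper simply says that by \reflem{A+C/2} the convexity inequality of \refprop{LocalConvexity} is equivalent to $-\pi < (A+C)/2 < 0$, and by \reflem{AC} this inequality holds. Your additional remarks about identifying $P,Q$ and checking $\rho\in(0,1)$ are reasonable expansions, but the paper treats these as already absorbed into the setup preceding \reflem{A+C/2}.
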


\begin{proof}

To prove geometric canonicity at the face $(\zeta\zeta’\infty)$ of the hexagon, we need to check the convexity inequality of \refprop{LocalConvexity}. By \reflem{A+C/2}, the convexity inequality is satisfied if and only if the angles $A, C$ of the vectors $\vec{a}, \vec{c}$ of the hexagon  satisfy \[ -\pi < \dfrac{A + C}{2} < 0. \] But in \reflem{AC}, we showed that indeed this relationship holds. Geometric canonicity at the face $(\zeta\zeta’\infty)$ is proved. \qedhere

\end{proof}

\begin{lemma}\label{Lem:IntLST}
Geometric canonicity holds in the interior of a layered solid torus.
\end{lemma}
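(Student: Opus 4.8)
The plan is to reduce geometric canonicity at an arbitrary interior face of a layered solid torus to the single case already handled in \reflem{FaceHex}. Recall that every tetrahedron in a layered solid torus is inserted via a diagonal exchange as the Farey path passes from one triangle to the next, and each such tetrahedron sits between two consecutive hexagons in the cusp diagram. Up to an isometry of $\mathbb{H}^3$ and relabelling, we may assume the hexagon between $\Delta_i$ and $\Delta_{i+1}$ has its vertices at $-1, \zeta, \zeta', 1, -\zeta, -\zeta'$ as in \reffig{GSHex}, with $\Delta_{i+1}$ layered in the $L$ direction (the $R$ case is the mirror image and follows by the same argument applied to the conjugate configuration). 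Every interior $2$-dimensional face of the lifted complex $\tilde{\mathcal{D}}$ is, after applying a covering transformation, one of the vertical faces of such a hexagon, and by the symmetry of the hexagon together with the threefold repetition of each boundary face (as noted in the discussion preceding \reflem{diamac}), it suffices to check the convexity inequality of \refprop{LocalConvexity} at the face $(\zeta\zeta'\infty)$.

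Next I would assemble the ingredients already in place. \reflem{diamac} and \reflem{diamab} (and the analogous statement for $H_{bc,\zeta'}$) pin down the diameters of the horoballs at the six hexagon vertices in terms of the side lengths $a,b,c$ of the vectors $\vec{a},\vec{b},\vec{c}$; the subsequent lemma records the corresponding isotropic vectors $v_\zeta, v_{\zeta'}, v_\infty, v_{-1}, v_1$ in Minkowski space. \reflem{A+C/2} then shows that the convexity inequality $\lambda+\mu+\nu>1$ at the face $(\zeta\zeta'\infty)$ is equivalent to the purely geometric condition $-\pi < (A+C)/2 < 0$ on the argument angles of $\vec{a}$ and $\vec{c}$. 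Finally, \reflem{AC}—which extracts this angle inequality from the left-handedness of the layering Möbius transformation $f$ (\reflem{MobLeft})—verifies exactly this condition. Chaining these, \reflem{FaceHex} gives geometric canonicity at $(\zeta\zeta'\infty)$, hence at every vertical face of every interior hexagon, hence at every interior face of the layered solid torus.

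The main point requiring care is the reduction step: one must be sure that \emph{every} interior $2$-face of the layered solid torus really does appear as a vertical face of some hexagon between consecutive tetrahedra, and that stripping off outer layers (to realise a given hexagon as the outermost hexagon of a smaller layered solid torus) does not disturb the horoball packing used in \reflem{diamac}--\reflem{diamab}. This is precisely the observation, made just before \reflem{diamac}, that a smaller layered solid torus is obtained by deleting outer tetrahedra, so the combinatorics and geometry of an interior hexagon are identical to those of an outermost one; since the horoball $H_{1,\infty}$ and its covering translates are defined intrinsically, the diameters $ac, ab, bc$ computed there are the correct ones for any hexagon. With that in hand there is no further computation: the lemma is a direct corollary of \reflem{FaceHex} applied layer by layer.
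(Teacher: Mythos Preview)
Your proposal is correct and follows essentially the same approach as the paper's own proof: reduce every interior face to the single face $(\zeta\zeta'\infty)$ via the threefold repetition of faces around each hexagon together with the hexagon's central symmetry, then invoke \reflem{FaceHex}. Your write-up is more explicit about the reduction (covering transformations, the $L$/$R$ mirror, and the ``strip off outer layers'' justification for using \reflem{diamac}--\reflem{diamab} at any hexagon), but the argument is the same.
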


\begin{proof}
By \reflem{FaceHex}, geometric canonicity holds at the face $(\zeta\zeta’\infty)$ of the hexagon. But this face alternates every other one around the hexagon. Therefore, showing local convexity at one face automatically gives the result for the other faces. Moreover, the hexagon is symmetric, so opposite faces are identical. Thus an argument for one face immediately applies to the face on the opposite side. This proves geometric canonicity at every interior face in a layered solid torus. \qedhere

\end{proof}

Gu\'{e}ritaud and Schleimer prove the following result in \cite[Section~4.4]{GueritaudSchleimer}. The argument is similar in nature to the argument of \reflem{IntLST}, that is, we check the convexity inequality of \refprop{LocalConvexity} in Minkowski space.

\begin{lemma}\label{Lem:CoreLST}
Geometric canonicity holds at the core of a layered solid torus. 
\end{lemma}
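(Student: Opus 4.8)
The plan is to verify the convexity inequality of \refprop{LocalConvexity} at the one remaining face, namely the face $\bar F$ created by the closing-up fold at the core of the layered solid torus, by reducing the verification to the Minkowski computation of \reflem{A+C/2} together with the handedness argument of \reflem{AC}. Recall that a layered solid torus is closed by folding two faces $F, F'$ of the innermost tetrahedron $\Delta_{N-1}$ across the edge $m'$; every other face interior to the solid torus lies between two consecutive tetrahedra $\Delta_i, \Delta_{i+1}$ of the layering and is already handled by \reflem{IntLST} (for the innermost interface $\Delta_{N-2}, \Delta_{N-1}$ one strips the outer layers to expose it as in \reflem{FaceHex}). So $\bar F$ is the only face left.

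First I would identify the two polyhedra of the Minkowski complex $\tilde{\mathcal{D}}$ adjacent to $\bar F$. By \refthm{MainBorromean} the fold is a geometric gluing, realized by an isometry $\delta$ of $\HH^3$ that preserves the edge $m'$ and carries $F'$ onto $F$; since $\delta$ lies in $\pi_1(M)$ it permutes the lifted horoball system and acts linearly on the isotropic vectors of \refsec{Mink}. Fixing a lift $\tilde\Delta$ of $\Delta_{N-1}$, the polyhedron on one side of the lift of $\bar F$ is $\tilde\Delta$, with opposite vertex $P$, and the polyhedron on the other side is $\delta(\tilde\Delta)$, with opposite vertex $Q = \delta(P)$; here $P \ne Q$ because $\delta$ is nontrivial. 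Stripping off the outer layers exposes the innermost hexagon at $\bar F$, so \reflem{diamac}, \reflem{diamab} and the analogous statement for the third edge apply, and the vectors $\vec{a}, \vec{b}, \vec{c}$ and the isotropic-vector formulas established above are available unchanged.

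Then I would write the convexity relation $\rho P + (1-\rho)Q = \sum_i \lambda_i A_i$ in isotropic vectors, solve the resulting $4 \times 4$ system, and use $Q = \delta(P)$ together with the $\delta$-equivariance of the horoball system to reduce $\sum_i \lambda_i > 1$ to the positivity of a quantity $Z$ of exactly the shape appearing in \reflem{A+C/2}; as there, $Z > 0$ is equivalent to
\[ -\pi < \frac{A + C}{2} < 0, \]
where $\vec{a}, \vec{c}$ are the hexagon vectors adjacent to the vertex of $\bar F$ playing the role of the vertex $1$. This last inequality is then checked as in \reflem{AC}: the M\"obius transformation built from the fold map $\delta$, in place of a layering map, is left-handed by the handedness computation of \reflem{MobLeft}, which forces $\text{Im}(\vec{a}\,\vec{c}) < 0$ and hence, with $-2\pi < A + C < \pi$, the desired bound. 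Combined with \reflem{IntLST}, this gives geometric canonicity at every interior face of a layered solid torus, in particular at its core.

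The main obstacle I expect is the bookkeeping of the self-gluing: unlike an interior face, the two polyhedra meeting $\bar F$ are two distinct lifts of the single tetrahedron $\Delta_{N-1}$, so one must set $P$ and $Q$ correctly and track precisely how $\delta$ acts on the horoballs at the vertices of $\bar F$ and at $P$; and one must establish the analogue of \reflem{MobLeft} for the fold map, i.e.\ that the M\"obius transformation closing off the Farey-graph walk is left-handed. Once these are in place, the Minkowski computation is word for word the interior-face computation.
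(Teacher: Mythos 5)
Your plan diverges substantially from the paper's proof, and it has a gap that the paper's argument never has to confront.

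In the paper, the core face is $(-1, 1, \infty)$, obtained from the hexagon of \reffig{GSHex} by setting $\zeta' = -1$; the innermost ``hexagon'' degenerates to the broken line $(\zeta, -1, 1, -\zeta)$. The two polyhedra adjacent to this face have opposite vertices $\zeta$ and $-\zeta$, so the convexity relation to be checked is $\rho v_\zeta + (1-\rho) v_{-\zeta} = \lambda v_\infty + \mu v_1 + \nu v_{-1}$. Note the roles are interchanged relative to \reflem{A+C/2}: there the face vertices were $\zeta, \zeta', \infty$ and the opposite vertices were $\pm 1$, here the face vertices are $\pm 1, \infty$ and the opposite vertices are $\pm\zeta$. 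Solving this system gives $\rho = 1/2$, $\lambda = (\abs{\zeta}^2-1)/\abs{\zeta+1}^2$, $\mu = \nu = 1/\abs{\zeta+1}$, and $\lambda + \mu + \nu > 1$ reduces to $\abs{\zeta} + 1 > \abs{\zeta+1}$, the Euclidean triangle inequality for $(0, -1, \zeta)$. No angle criterion and no handedness argument appear: the core inequality is strictly weaker and always holds.

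Your plan asserts, without carrying out the computation, that the resulting $Z$ has ``exactly the shape appearing in \reflem{A+C/2}'' and hence reduces to $-\pi < (A+C)/2 < 0$. That assertion is what fails: because the system has the face and opposite vertices swapped and because $\zeta' = -1$, the vector $\vec{b} = \zeta' - \zeta$ becomes $-\vec{a}$, $\vec{c}$ becomes real, and the hexagon (and the counterclockwise-orientation hypotheses driving the trigonometric identities of \reflem{A+C/2}) degenerates. The reduction simply does not carry over. You also correctly flag that a fresh left-handedness result for the fold map $\delta$ would be needed; this is an additional, unproved step that the paper's computation makes unnecessary. So both of your stated obstacles are real, and the first one sinks the approach rather than being mere bookkeeping. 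The fix is to abandon the reduction to \reflem{A+C/2} and instead solve the linear system directly for the core face, which collapses to the triangle inequality.
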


\begin{proof}
Recall that the final step in the construction of a layered solid torus is performed by folding the ideal triangles of the innermost tetrahedron across an edge. This corresponds to a $3$-valent picture at the centre of the cusp diagram. In this case, we may label the vertices in the cusp picture as in \reffig{GSHex} but now we set $\zeta’ = -1$. Then the “hexagon” at the centre can be described by the broken line $(\zeta, -1, 1, -\zeta)$.

To prove geometric canonicity at the face $(-1 1 \infty)$, we must first compute the horoballs centred at the vertices $\zeta, -\zeta, \infty, -1, 1$. Note that this is done in exactly the same way as in \reflem{diamac} and \reflem{diamab}. Using equation~\eqref{Eq:IsoVec}, the isotropic vectors corresponding to these horoballs are respectively:

\[ \begin{array}{ccccccccc}

\vspace{2mm}

v_\zeta & = & \frac{1}{\abs{\zeta + 1}^2} & ( & 2\xi, & 2\eta, & 1 - \xi^2 - \eta^2, & 1 + \xi^2 + \eta^2 & ) \\ \vspace{2mm}

v_{-\zeta} & = & \frac{1}{\abs{\zeta + 1}^2} & ( & -2\xi, & -2\eta, & 1 - \xi^2 - \eta^2, & 1 + \xi^2 + \eta^2 & ) \\ \vspace{2mm}

v_\infty & = & & ( & 0, & 0, & -1, & 1 & ) \\ \vspace{2mm}

v_{-1} & = & \frac{1}{2 \abs{\zeta + 1}} & ( & -2, &  0, &  0, & 2 & ) \\ \vspace{2mm}

v_{1} & = & \frac{1}{2 \abs{\zeta + 1}} & ( & 2, & 0, & 0, & 2 & ). \\

\end{array} \] 

Solving the equation $\rho v_{\zeta} + (1 - \rho) v_{-\zeta} = \lambda v_{\infty} + \mu v_{1} + \nu v_{-1}$ gives a unique solution $\rho=1/2$, \[ \lambda = \dfrac{\abs{\zeta}^2 - 1}{\abs{\zeta + 1}^2} \quad \text{and} \quad \mu = \nu = \dfrac{1}{\abs{\zeta + 1}}. \] Then \[\lambda + \mu + \nu = \dfrac{\abs{\zeta}^2 - 1 + 2 \abs{\zeta + 1}}{\abs{\zeta + 1}^2}. \]

To check geometric canonicity, we need to show that $\lambda + \mu + \nu > 1$, or equivalently $\abs{\zeta}^2 > (\abs{\zeta + 1} - 1)^2$. But the triangle inequality for the Euclidean triangle $(0, -1, \zeta)$ asserts that $\abs{\zeta} + 1 > \abs{\zeta + 1}$. Hence  $\lambda + \mu + \nu > 1$; the convexity inequality of \refprop{LocalConvexity} holds. \qedhere

\end{proof}

The next lemma is a direct consequence of \reflem{IntLST} and \reflem{CoreLST}.

\begin{lemma}\label{Lem:GeomCanDLST}
Geometric canonicity holds in the interior and at the core of the double cover of a layered solid torus. 
\end{lemma}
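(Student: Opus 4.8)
The plan is to transfer the two facts just established for a layered solid torus---geometric canonicity at every interior face (\reflem{IntLST}) and at the core face (\reflem{CoreLST})---to a double cover, observing that every face of the double cover, together with its adjacent vertices, sits inside an isometric copy of the corresponding local picture in the single layered solid torus. First I would recall from \refsec{LayeredST} that when $l$ is odd the double layered solid torus is literally the double cover of a layered solid torus: the covering map $T^2\to T^2$ on the boundary lifts to a covering of triangulated solid tori, each tetrahedron of the base being covered by two isometric tetrahedra. Consequently each $2$-dimensional face $F$ of the lifted triangulation $\tilde{\mathcal D}$ in Minkowski space, together with the two vertices $P,Q$ of the faces adjacent to $F$, is the isometric image of a face of the layered solid torus with its adjacent vertices; the covering isometry acts on $\mathbb H^3$, hence on $\mathbb M^4$, preserving horoballs and the isotropic cone, so it carries the convexity data of \refprop{LocalConvexity} isometrically. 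Since the inequality $\sum_i\lambda_i>1$ with $\rho\in(0,1)$ is invariant under such an isometry (the barycentric coefficients $\lambda_i$ and $\rho$ are determined by the positions of the isotropic vectors, which are permuted isometrically), the inequality holds for $F$ because it holds for its isometric preimage downstairs.

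The one point needing genuine care is the core face of the double cover, item (3): a priori one might worry that the core of the double cover is a new local picture not present in a single layered solid torus. Here the hard part---and the reason Gu\'eritaud--Schleimer only sketched this---is to check that the final folding in the double cover still produces exactly the $3$-valent core picture of \reflem{CoreLST}. I would argue as follows: the last diagonal exchange in \refconst{SideBySide}-style counting is replaced, in the odd case, by closing up the lift of the once-punctured-torus boundary, and this lift is again a once-punctured torus folded across the edge $m'$ so that $m$ bounds a disc, because the slope $m=l/k$ with $l$ odd avoids the lattice $\mathbb Z^2$ and so lifts to a single arc on the double cover. Thus the core tetrahedron of the double cover is isometric to the core tetrahedron of \reflem{CoreLST}, the broken line $(\zeta,-1,1,-\zeta)$ and the horoballs about $\zeta,-\zeta,\infty,-1,1$ are the same up to the covering isometry, and the computation $\lambda+\mu+\nu=(\abs{\zeta}^2-1+2\abs{\zeta+1})/\abs{\zeta+1}^2>1$ of \reflem{CoreLST} applies verbatim.

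Assembling these observations: for an interior face of the double cover, apply \reflem{IntLST} to its isometric preimage; for the core face, apply \reflem{CoreLST} to its isometric preimage. In both cases the convexity inequality of \refprop{LocalConvexity} is satisfied, so $\tilde{\mathcal D}$ is locally convex at every such face, which is precisely geometric canonicity in the interior and at the core of the double cover of a layered solid torus. I expect the only subtlety a careful reader will want spelled out is the verification that the covering map genuinely restricts to a local isometry near each face---this is immediate from the fact that the hyperbolic structure on the double cover is the pullback of the one on the base, so no metric distortion occurs---and the bookkeeping that the two sheets of the cover do not identify any face with a vertex adjacent to it in a way that would change the $P,Q$ in \refprop{LocalConvexity}, which again follows because the slope $m$ lifts to a single arc and hence the cover is ``unramified'' over the core.
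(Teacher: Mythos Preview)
Your argument is correct and follows the same route as the paper: local convexity is preserved under covering maps, so the double cover of a canonically triangulated layered solid torus is itself canonically triangulated. The paper compresses this into a single sentence---``The double cover of a canonical triangulation gives a canonical triangulation''---whereas you unpack the local-isometry reasoning face by face and give the core a separate paragraph; this extra care is harmless but unnecessary, since the covering map is an unramified local isometry near every face, core included, and so the five isotropic vectors entering \refprop{LocalConvexity} are carried by a Lorentz transformation to the corresponding five downstairs.
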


\begin{proof}
  The double cover of a canonical triangulation gives a canonical triangulation. \qedhere
\end{proof}

The following result is proved in Section~5.3 of Gu\'{e}ritaud--Schleimer \cite{GueritaudSchleimer}. The proof is again by checking the convexity statement of \refprop{LocalConvexity}.

\begin{lemma}\label{Lem:GeomCanSideLST} 
Geometric canonicity holds in the interior and at the core of a side-by-side layered solid torus. 
\end{lemma}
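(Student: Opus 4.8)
The plan is to follow \cite[Section~5.3]{GueritaudSchleimer} and partition the $2$-faces of a side-by-side layered solid torus into the \emph{interior} faces (those disjoint from the core tetrahedron) and the \emph{core} faces, exactly as was done for the double cover in \reflem{GeomCanDLST}, then treat the two families separately: the interior faces will reduce verbatim to work already done, and only the core will require a fresh computation in Minkowski space.

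\textbf{Interior faces.} By \refsec{LayeredST}, the side-by-side construction layers tetrahedra via exactly the same diagonal exchanges used to build an ordinary layered solid torus, differing only in that two identical tetrahedra are attached at each step, that one additional step is taken ($k=N$ rather than $k=N-1$), and that the closing-up is an edge identification rather than a fold. Hence, for a $2$-face $F$ disjoint from the core tetrahedron, the two tetrahedra meeting $F$, the hexagon of \reffig{GSHex} they span, the vectors $\vec{a},\vec{b},\vec{c}$ attached to that hexagon, and the face-pairing M\"{o}bius transformation $f$ of \reflem{MobLeft} form a picture combinatorially and metrically identical to the one analysed in \reflem{diamac} through \reflem{FaceHex}. (The only faces warranting a separate glance are those along the seam where the two side-by-side copies of $(T_0,\dots,T_N)$ meet; since the slopes that are not being swapped persist through the construction, the two tetrahedra across such a face are still consecutive layers, so the same hexagon picture applies.) Therefore \reflem{A+C/2} together with \reflem{AC} gives the convexity inequality of \refprop{LocalConvexity} at the face $(\zeta\zeta'\infty)$ of each interior hexagon, and the symmetry of the hexagon invoked in \reflem{IntLST} promotes this to every interior face.

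\textbf{Core faces.} Here the argument of \reflem{CoreLST} must be reworked for the closing-up of the side-by-side construction, in which the two edges of slope $s/k$ in the innermost twice-punctured torus are identified and a single core tetrahedron is inserted with its four faces glued to the four triangles of the innermost layer. First I would draw the cusp picture at the core --- the analogue of the broken line $(\zeta,-1,1,-\zeta)$ of \reflem{CoreLST}, now recording the side-by-side identification --- and read off, for each face of the core tetrahedron, its three ideal vertices together with the two opposite vertices $P$ (from the core tetrahedron) and $Q$ (from the innermost layer) supplied by \refprop{LocalConvexity}. Next, using the principle from \reflem{diamac} and \reflem{diamab} that the horoball about a hexagon vertex has Euclidean diameter equal to the product of the lengths of the two adjacent vectors, I would compute these horoballs and convert them to isotropic vectors via \eqref{Eq:IsoVec}. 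Finally I would solve the $4\times 4$ linear system $\rho P+(1-\rho)Q=\sum_i\lambda_i A_i$ for each core face and check that $\sum_i\lambda_i>1$. As in \reflem{CoreLST}, I expect $\sum_i\lambda_i-1$ to factor so that its positivity reduces to the triangle inequality for an explicit Euclidean triangle in the cusp diagram, of the shape $\abs{\zeta}+1>\abs{\zeta+1}$, which holds automatically. Combining the two families then yields geometric canonicity at every face of a side-by-side layered solid torus.

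The main obstacle is the core computation: correctly pinning down the combinatorics of the core tetrahedron inside the side-by-side construction --- which of its faces must be tested, and which horoballs realise the roles of $P$ and $Q$ --- and then verifying that each resulting inequality $\sum_i\lambda_i>1$ collapses to an elementary geometric fact rather than to a genuine constraint on the filling slope (it had better not, since \refthm{MainBorromean} already guarantees the triangulation is geometric, so convexity should be automatic). A lesser subtlety is the seam between the two side-by-side copies in the interior argument, where one must confirm that no $2$-face there is genuinely new.
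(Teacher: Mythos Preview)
Your outline matches the paper's proof in structure, but two specifics deserve correction. For the interior faces, the paper does \emph{not} invoke \reflem{MobLeft} (that is, \cite[Proposition~23]{GueritaudSchleimer}) directly: the face-pairing isometry in a side-by-side torus is a different deck transformation than in an ordinary layered solid torus, and its left-handedness is the content of \cite[Proposition~32]{GueritaudSchleimer}, which the paper cites but does not reprove. Once that analogue is in place, the horoball diameters are computed as in \reflem{diamac} and \reflem{diamab} and the argument of \reflem{FaceHex} runs unchanged; your claim that the interior picture is ``metrically identical'' so that \reflem{MobLeft} applies verbatim skips this step.

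For the core, your plan is right but the concrete picture differs from what you guess by analogy with \reflem{CoreLST}: the closing-up of the side-by-side construction identifies a pair of \emph{opposite} vertices of the innermost hexagon, giving a $4$-valent (not $3$-valent) centre. The paper labels the degenerate hexagon $(-1,0,\zeta,1,0,-\zeta)$ and checks the face $(0\,\zeta\,\infty)$; solving $\lambda v_\infty+\mu v_0+\nu v_\zeta=\rho v_1+(1-\rho)v_{-1}$ gives $\nu=0$ and $\lambda+\mu+\nu=(|\zeta|+1)/|\zeta-1|$, which exceeds $1$ by the triangle inequality $|\zeta|+1>|\zeta-1|$ for the triangle $(0,1,\zeta)$ --- so your expected form is correct apart from the sign inside the modulus.
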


\begin{proof}

The interior faces of a side-by-side layered solid torus are treated exactly as in \reflem{FaceHex}, except that now we require a result analogous to \reflem{MobLeft} for a side-by-side layered solid torus. Using the notion of handedness, Gu\'{e}ritaud and Schleimer prove such a result; see \cite[Proposition~32]{GueritaudSchleimer}. We omit the proof here as we have kept the discussion on handedness in this paper to a minimum. Nevertheless, this result allows us to compute the radii of the horoballs in the cusp diagram of a side-by-side layered solid torus in exactly the same way as in \reflem{diamac} and \reflem{diamab}. The argument of \reflem{FaceHex} now follows through unchanged. 

It remains to prove geometric canonicity at the core faces of a side-by-side layered solid torus. Recall that the final step in the construction of a side-by-side layered solid torus involves identifying an edge across the last pair of tetrahedra in the layering. This corresponds to identifying a pair of opposite vertices of the innermost hexagon in the cusp diagram. The result is a $4$-valent picture at the centre. In this case, we may label the vertices of the innermost hexagon as $-1, 0, \zeta, 1, 0, -\zeta$. 

We shall prove geometric canonicity at the face $(0 \zeta \infty)$. As mentioned above, we compute the horoballs centred at $-1, 1, \infty, 0, \zeta$ as in \reflem{diamac} and \reflem{diamab}. Using equation~\eqref{Eq:IsoVec}, the isotropic vectors corresponding to the above horoballs are respectively:

\[ \begin{array}{ccccccccc}

\vspace{2mm}

v_{-1} & = & \frac{1}{\abs{\zeta - 1}} & ( & -2, &  0, &  0, & 2 & ) \\ \vspace{2mm}

v_{1} & = & \frac{1}{\abs{\zeta - 1}} & ( & 2, & 0, & 0, & 2 & ) \\ \vspace{2mm}

v_\infty & = & & ( & 0, & 0, & -1, & 1 & ) \\ \vspace{2mm}

v_0 & = & \frac{1}{\abs{\zeta}} & ( & 0, & 0, & 1, & 1 & ) \\ \vspace{2mm}

v_\zeta & = & \frac{1}{\abs{\zeta} \abs{\zeta - 1}} & ( & 2\xi, & 2\eta, & 1 - \abs{\zeta}^2, & 1 + \abs{\zeta}^2 & ). \\

\end{array} \]

The equation $\lambda v_{\infty} + \mu v_0 + \nu v_{\zeta} = \rho v_1 + (1 - \rho) v_{-1}$ can be solved to obtain a unique solution:

\[ \lambda = \dfrac{1}{\abs{\zeta - 1}}, \quad \mu = \dfrac{\abs{\zeta}}{\abs{\zeta - 1}}, \quad \text{and} \quad \nu = 0. \] 

Therefore \[ \lambda + \mu + \nu = \dfrac{\abs{\zeta} + 1}{\abs{\zeta - 1}} > 1, \] because the triangle inequality for the triangle $(0, 1, \zeta)$ asserts that $\abs{\zeta} + 1 > \abs{\zeta - 1}$. The convexity inequality of \refprop{LocalConvexity} is satisfied, so geometric canonicity holds at the face $(0 \zeta \infty)$. \qedhere

\end{proof}

Let $M$ be the manifold obtained by Dehn filling the crossing circles cusps of the Borromean rings link complement along slopes $m_1, m_2 \in (\QQ\cup\{1/0\})- \{0, 1/0, \pm 1, \pm 2\}$. Let $K$ be the unfilled cusp of $M$. We now show that geometric canonicity holds at a face on the boundary of the Borromean rings link complement. By \reflem{OneFace} and \reflem{ThreeCases}, there are three cases to consider.

{\bf Case 1: One positive slope and one negative slope}

First note that this is the case on the left of \reffig{Borr3Cases}. Label the vertices in the cusp diagram of $K$ as in \reffig{Borr3Cases} (left). Notice that the triangle inside the non-convex hexagon can be considered as another layer on the solid torus with cusp triangulation given by the convex hexagon. Then the argument of \reflem{FaceHex} follows through unchanged.

{\bf Case 2: Two negative slopes}

First note that this is the middle case in \reffig{Borr3Cases}. Label the vertices in the cusp diagram of $K$ as in \reffig{Borr3Cases} (middle). The argument of \reflem{FaceHex} follows through up to and including \reflem{A+C/2}. Thus, to prove geometric canonicity at a face on the boundary of the Borromean rings link complement in this case, it remains to show that \[ -\pi < \dfrac{A + C}{2} < 0. \] 

\begin{lemma}
Suppose that $m_1$ and $m_2$ are both negative and within the allowed set. Then the angles $A, C$ of the vectors $\vec{a}, \vec{c}$ in the cusp diagram of $K$ must satisfy one of the following three cases.
\begin{itemize}
\item[(1)] $A > 0$ and $C < 0$
\item[(2)] $A < 0$ and $C > 0$
\item[(3)] $A < 0$ and $C < 0$
\end{itemize}
In case~(1), the vertices $-1, 1,\zeta,\zeta'$ have the configuration of \reffig{Case2Cases} (left). In case~(2), these vertices have the configuration of \reffig{Case2Cases} (middle), and in case~(3), these vertices have the configuration of \reffig{Case2Cases} (right). 
\end{lemma}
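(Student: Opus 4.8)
The plan is to reuse verbatim the setup of \reflem{A+C/2} --- the labelling of the hexagon, the vectors $\vec{a} = a\exp(iA)$, $\vec{b} = b\exp(iB)$, $\vec{c} = c\exp(iC)$ with $a,b,c>0$, the orientation constraints $0 < B < \pi$, $B - \pi < A < B$, $B - \pi < C < B$, and the identity $\vec{a} + \vec{b} + \vec{c} = 2$ --- all of which survive in the Case~2 configuration, since we are told the argument of \reflem{FaceHex} goes through up to and including \reflem{A+C/2}. What we lose relative to Case~1 is the handedness input of \reflem{MobLeft} and \reflem{AC}; so the task is to recover, by a different route, enough sign information on $A$ and $C$ to eventually force $(A+C)/2\in(-\pi,0)$. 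The present lemma is the bookkeeping step isolating the three sub-cases in which this will later be verified.

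The main step is to rule out the fourth possibility, $A \ge 0$ and $C \ge 0$. Suppose it held. From $B-\pi < A < B$ and $0 < B < \pi$ we get $A \in (-\pi,\pi)$, so $A \ge 0$ forces $A \in [0,\pi)$ and hence $\operatorname{Im}(\vec{a}) = a\sin A \ge 0$; likewise $\operatorname{Im}(\vec{c}) = c\sin C \ge 0$; and $\operatorname{Im}(\vec{b}) = b\sin B > 0$ since $0 < B < \pi$. Adding these, $\operatorname{Im}(\vec{a}+\vec{b}+\vec{c}) > 0$, contradicting $\vec{a}+\vec{b}+\vec{c} = 2 \in \mathbb{R}$. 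Hence $A < 0$ or $C < 0$. Together with the observation that the filling slopes lying in the allowed set keep the vertices $\zeta,\zeta'$ off the line through $-1$ and $1$ --- equivalently $\eta\neq 0$ and $\eta'\neq 0$, i.e. $A\neq 0$ and $C\neq 0$ --- this leaves exactly the trichotomy (1)--(3) of the statement.

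It remains to match each sub-case with the vertex configuration of \reffig{Case2Cases}. Because $A \in (B-\pi,B)\subset(-\pi,\pi)$, the sign of $\sin A$ is the sign of $A$, so by $\sin A = \eta/a$ (as in \reflem{A+C/2}) the sign of $\eta$ equals the sign of $A$; similarly $\sin C = -\eta'/c$ shows the sign of $\eta'$ is opposite to the sign of $C$. Thus in case~(1) we have $\eta>0$, $\eta'>0$, so $\zeta$ and $\zeta'$ both lie above the line through $-1$ and $1$ with $\eta'>\eta$, giving \reffig{Case2Cases} (left); in case~(2), $\eta<0$ and $\eta'<0$, so both lie below, giving \reffig{Case2Cases} (middle); in case~(3), $\eta<0$ and $\eta'>0$, so $\zeta$ lies below and $\zeta'$ above, giving \reffig{Case2Cases} (right). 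In each case the positions of $-1$ and $1$, the signs just computed, the inequality $\eta'>\eta$, and the counterclockwise orientation of the triangles $-1\zeta\zeta'$ and $1\zeta'\zeta$ together pin down the pictured configuration up to isotopy.

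The only genuine subtlety is the one flagged at the outset: with the non-convex-hexagon local picture of Case~2 we cannot appeal to \reflem{AC}, so the angle information must instead be extracted purely from the global relation $\vec{a}+\vec{b}+\vec{c}=2$ and the triangle-orientation constraints; the imaginary-part computation above is exactly that. I expect the genericity point (that $A,C\neq 0$) to be the one place requiring a brief extra remark rather than an appeal to earlier results. Once this trichotomy is established, the subsequent lemmas can verify $-\pi < (A+C)/2 < 0$ separately in cases (1)--(3), which is where the sign data and the configurations of \reffig{Case2Cases} are used.
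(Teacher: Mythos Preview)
Your argument is correct and is essentially the same as the paper's, just phrased in slightly different language. The paper organises the case analysis around the orientations of the two auxiliary triangles $-1\zeta 1$ and $-1\zeta' 1$: since the sign of $\eta$ (resp.\ $\eta'$) determines that orientation, and since $\sin A=\eta/a$, $\sin C=-\eta'/c$, this is exactly your sign analysis of $A$ and $C$ in disguise. The excluded fourth case is likewise the same: the paper rules it out by observing that $\eta>0>\eta'$ would force $-1\zeta\zeta'$ and $1\zeta'\zeta$ to be clockwise (equivalently, would contradict $\eta'>\eta$), while you rule out $A,C\ge 0$ via the imaginary part of $\vec a+\vec b+\vec c=2$, an argument already appearing in the proof of \reflem{A+C/2}. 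The one extra datum the paper records is the containment (e.g.\ in case~(1), $\zeta$ lies inside the triangle $-1\zeta'1$), which it uses to match the figure; your combination of the signs of $\eta,\eta'$, the inequality $\eta'>\eta$, and the orientation of $-1\zeta\zeta'$, $1\zeta'\zeta$ pins down the same configurations. Your flagged caveat about $A,C\neq 0$ is also implicit in the paper, which tacitly assumes the auxiliary triangles are nondegenerate.
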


\begin{proof}
Rotate the diagram of \reffig{Borr3Cases} (middle) so that $-1$ and $1$ actually lie on the real axis. Then $A$ is the angle that $\vec{a}$ makes with the real axis, where $\vec{a}$ has endpoints at $-1$ and $\zeta$.
  
Recall that the triangles $-1\zeta\zeta'$ and $1\zeta'\zeta$ are oriented anticlockwise. Next consider the triangles $-1\zeta1$ and $-1\zeta'1$. Note that in \reffig{Borr3Cases} (middle), one of these is oriented anticlockwise and the other clockwise, but it could be that both are oriented clockwise. If so, then $\zeta$ must lie inside $-1\zeta'1$ as in \reffig{Case2Cases} (left), for otherwise $-1\zeta\zeta'$ is oriented clockwise. In this case, we must have $A > 0$ and $C < 0$.

\begin{figure}
\centering
  \import{figures/}{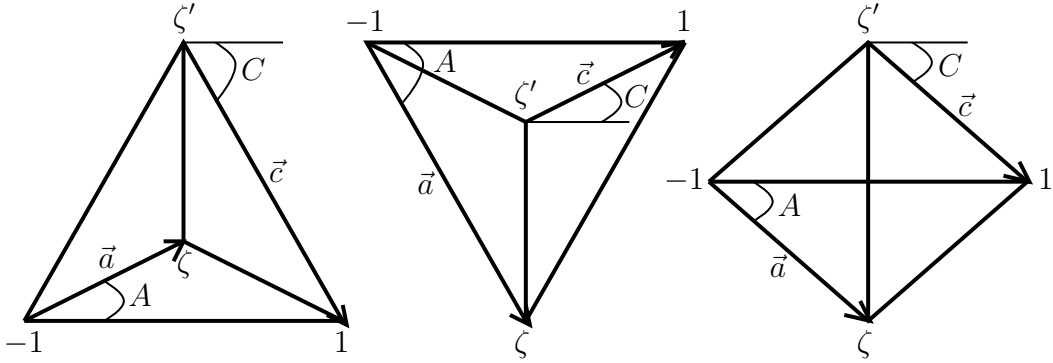}
  \caption{Possible configurations of $-1, 1, \zeta, \zeta'$ in case 2.}
  \label{Fig:Case2Cases}
\end{figure}

If both of the triangles $-1\zeta1$ and $-1\zeta'1$ are oriented anticlockwise, then $\zeta'$ must lie inside $-1\zeta1$, as in \reffig{Case2Cases} (middle), for otherwise $-1\zeta\zeta'$ is oriented clockwise. In this case, we have $A < 0$ and $C > 0$.

If $-1\zeta1$ is clockwise and $-1\zeta'1$ is anticlockwise, then $-1\zeta\zeta'$ and $1\zeta'\zeta$ are clockwise, which is a contradiction. So $-1\zeta1$ is anticlockwise and $-1\zeta'1$ is clockwise, which forces $A < 0$ and $C < 0$, as in \reffig{Case2Cases} (right).
\end{proof}

\begin{lemma}\label{Lem:AngNeg}
Suppose that $m_1$ and $m_2$ are both negative and within the allowed set. Then the angles $A, C$ of the vectors $\vec{a}, \vec{c}$ in the cusp diagram of $K$ must satisfy $A < 0$ and $C < 0$.
\end{lemma}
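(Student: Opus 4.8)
The plan is to eliminate cases~(1) and~(2) of the previous lemma, so that only case~(3) survives. It is convenient to reformulate first: normalising (as in the proof of the previous lemma) so that $-1$ and $1$ lie on the real axis with $-1<1$, the condition ``$A<0$ and $C<0$'' says precisely that $\zeta$ lies strictly below this axis and $\zeta'$ strictly above it, since $\vec a=\zeta+1$, $\vec c=1-\zeta'$, and we always have $\operatorname{Im}(\zeta)<\operatorname{Im}(\zeta')$ because $0<B<\pi$. In this language, case~(1) is ``$\zeta$ and $\zeta'$ both above the axis'' and case~(2) is ``$\zeta$ and $\zeta'$ both below''. So the lemma is equivalent to the statement that, when $m_1$ and $m_2$ are both negative, the segment $\zeta\zeta'$ crosses the line through $-1$ and $1$.

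I would prove this using the explicit geometry of the unfilled cusp recorded in \refsec{GeometryBR}. Before Dehn filling, the unfilled cusp is tiled by four congruent diamonds, cut out by the vertical geodesic planes carrying the diagonal slopes $\pm1$ of the two square bases of the octahedra, and these diamonds have an explicit Euclidean shape inherited from the regular ideal octahedron. Filling a crossing circle along a slope $m_i$ replaces the corresponding diagonal by the initial tetrahedra of the attached layered solid torus, and the net effect on the cusp diagram is that the diagonal bends; crucially, the \emph{direction} of the bend is governed by the sign of $m_i$ (this is the mechanism that produces the alternating non-convex/convex hexagons and, when both slopes are negative, the configuration of \reffig{Borr3Cases} (middle)). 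The two crossing circles play symmetric roles --- the symmetry of the Borromean rings interchanging them swaps $m_1\leftrightarrow m_2$, which are both negative, and swaps cases~(1) and~(2) --- so it suffices to control one of the bends. Tracking the first layered tetrahedron of the solid torus filled along $m_1<0$, I would check that the resulting position of $\zeta$ lies strictly below the line through $-1$ and $1$, giving $A=\arg(\zeta+1)\in(-\pi,0)$; the symmetric statement gives $C\in(-\pi,0)$. This excludes cases~(1) and~(2), leaving case~(3).

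The genuine obstacle is this last claim: converting ``$m_i$ negative'' into ``the bent diagonal vertex lands below the segment $[-1,1]$''. This is the one step that uses new information about the Borromean rings rather than the general Gu\'{e}ritaud--Schleimer machinery. I expect to handle it by combining the combinatorics of the Farey walk building the layered solid torus --- which of the three slopes of the initial triangle $T_0$ is killed is encoded by the sign of $m_i$, via \reflem{BorrDecomp} and the discussion following it --- with the explicit Euclidean coordinates of the octahedral square base in the cusp, and then observing that once the first layered tetrahedron has placed the vertex on the correct side of $[-1,1]$, the interior local convexity already established in \reflem{IntLST} keeps it there. If a hands-on check is preferred, one can instead write out the isotropic vectors of the relevant horoballs as in \refsec{GeometryBR} using equation~\eqref{Eq:IsoVec} and verify the sign of $\operatorname{Im}(\zeta)$ by direct computation, but the geometric description is more transparent.
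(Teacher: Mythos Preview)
Your proposal is not a proof; it is a plan whose central step you yourself flag as the ``genuine obstacle'' and then leave undone. Neither of your suggested tactics is carried out. The symmetry reduction is shaky: the two hexagons in \reffig{Borr3Cases} (middle) are not interchangeable (one is convex, one is not), and the symmetry of the Borromean rings exchanging the crossing circles does not obviously swap your cases~(1) and~(2) in this fixed picture. The other tactic --- place $\zeta$ using the first layered tetrahedron and then appeal to \reflem{IntLST} to ``keep it there'' --- conflates two different things: interior canonicity is a convexity statement about faces in Minkowski space, not a statement that a cusp vertex stays on one side of a Euclidean line as further tetrahedra are layered. You would still need a separate monotonicity argument.

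The paper's proof takes a completely different, and much shorter, route that avoids any explicit tracking of the Dehn filling or the Farey walk. It argues by contradiction using only the fact that the triangulation is geometric. In case~(1), the edge $\zeta\lambda$ of one hexagon runs in the direction $\vec b+\vec c$ (because, by the gluing of \reflem{BorromeanTet}, it is parallel to the edge $\zeta 1$ of the adjacent hexagon); if $A>0$ and $C<0$ with the configuration of \reffig{Case2Cases} (left), this edge enters the triangle $-1\zeta\zeta'$, so a boundary edge of the hexagon meets an interior triangle --- impossible for a geometric triangulation. Case~(2) is handled symmetrically using the edge $\zeta'\lambda'$ in the direction $\vec a+\vec b$. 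The whole argument is two short paragraphs and uses no information about $m_1,m_2$ beyond the combinatorial setup of \reffig{Borr3Cases} (middle); in particular it never computes where $\zeta$ or $\zeta'$ actually land.
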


\begin{proof}
Suppose for a contradiction that $A > 0$ and $C < 0$. In the proof of the previous lemma, this corresponds to $-1, 1,\zeta,\zeta'$ having the configuration of \reffig{Case2Cases} (left). Consider the edges of the two adjacent hexagons in the cusp; see again \reffig{Borr3Cases} (middle). Notice that the edge from $-1$ to $\zeta'$ runs along the vector $\vec{a} + \vec{b}$. The edge from $\zeta$ to $\zeta'$ runs along the vector $\vec{b}$.

The edge from $\zeta$ to $1$ runs along the vector $\vec{b} + \vec{c}$. This is an edge of the convex hexagon in \reffig{Borr3Cases} (middle), but it has the same length and direction as an edge of the non-convex hexagon with an endpoint at $\zeta$. Let the other endpoint of that edge be denoted $\lambda$, as in \reffig{Borr3Cases} (middle). Then the edge from $\lambda$ to $\zeta$ runs along the vector $\vec{b} + \vec{c}$.

%% Now observe that the edge from $\zeta$ to $1$ is also along the vector $\vec{b} + \vec{c}$. This means that the edge from $\zeta$ to $\lambda$ lies on the same line as that from $1$ to $\zeta$ with the same length. 

But then, in the case that  $A > 0$ and $C < 0$, the edge from $\zeta$ to $\lambda$ runs into the triangle $-1\zeta\zeta'$; see again \reffig{Case2Cases} (left). That is, the boundary of the convex hexagon intersects the triangle $-1\zeta\zeta'$. But this cannot happen because the triangle $-1\zeta\zeta'$ is geometric and therefore must be contained in the interior of the convex hexagon. Hence the case that $A > 0$ and $C < 0$ cannot happen.

Suppose for a contradiction that $A < 0$ and $C > 0$, then the vertices $-1, 1, \zeta, \zeta'$ have the configuration of \reffig{Case2Cases} (middle). Consider again the edges of the two adjacent hexagons in the cusp; see \reffig{Borr3Cases} (middle). Notice that the edge from $\zeta$ to $1$ runs along the vector $\vec{b} + \vec{c}$. The edge from $\zeta$ to $\zeta'$ runs along the vector $\vec{b}$. Moreover, the edge from $-1$ to $\zeta'$ runs along the vector $\vec{a} + \vec{b}$. Again observe that this side of the non-convex hexagon is identical in length and direction to a side of the convex hexagon with endpoint at $\zeta'$; we define the other endpoint to be $\lambda'$ as in \reffig{Borr3Cases} (middle). Thus the edge from $-1$ to $\zeta'$ also runs along the vector $\vec{a} + \vec{b}$. But then, in the case that $A < 0$ and $C > 0$, the edge from $\zeta'$ to $\lambda'$ intersects the triangle $1\zeta'\zeta$; see \reffig{Case2Cases} (middle). That is, the boundary of the non-convex hexagon intersects the triangle $1\zeta'\zeta$. This is a contraction because the geometric triangle $1\zeta'\zeta$ lies in the interior of the non-convex hexagon. Hence the case that $A < 0$ and $C > 0$ cannot happen. 
\end{proof}

By \reflem{AngNeg}, we have $A < 0$ and $C < 0$, therefore $A + C < 0$. By equation~\eqref{Eq:CuspAngles}, we have $2B - 2\pi < A + C$, which implies that $A + C > -2\pi$ because $B > 0$. Geometry canonicity holds at a face on the boundary of the Borromean rings link complement in this case.

{\bf Case 3: Two positive slopes}

Label the vertices in the cusp diagram of $K$ as in \reffig{Borr3Cases} (right). The argument of \reflem{FaceHex} follows through up to and including \reflem{A+C/2}. Thus, it remains to show that \[ -\pi < \dfrac{A + C}{2} < 0. \] Consider again the edges of the two adjacent hexagons in the cusp; see \reffig{Borr3Cases} (right). The edge from $-1$ to $\zeta$ runs along the vector $\vec{a}$. Observe that this side of the non-convex hexagon is identical in length and direction to a side of the convex hexagon with endpoints at $\zeta$ and $1$. Thus the edge from $\zeta$ to $1$ also runs along the vector $\vec{a}$. Rotate the diagram of \reffig{Borr3Cases} (right) so that $-1$ and $1$ actually lie on the real axis; see \reffig{Case3BR}. Then the angle that $\vec{a}$ makes with the real axis is zero, that is, $A = 0$.

Also, we must have $C < 0$ because the triangles $-1\zeta\zeta'$ and $1\zeta'\zeta$ are oriented anticlockwise. So $A + C < 0$. By equation~\eqref{Eq:CuspAngles}, we have $2B - 2\pi < A + C$, therefore $A + C > -2\pi$ since $B > 0$. Geometric canonicity holds in this case.

\begin{figure}
  \centering
  \import{figures/}{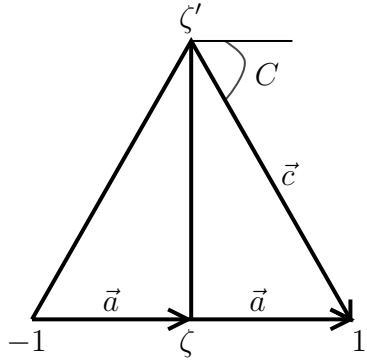}
  \caption{The only possibility in case 3.}
  \label{Fig:Case3BR}
\end{figure}

We have now proved the following result.

\begin{lemma}\label{Lem:GeomCanBoundBR}
Geometric canonicity holds on the boundary of the Borromean rings link complement.
\end{lemma}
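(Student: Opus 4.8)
The plan is to assemble \reflem{GeomCanBoundBR} from the three cases already analysed, which together exhaust all possibilities by \reflem{OneFace} and \reflem{ThreeCases}. Recall that \reflem{OneFace} reduces the verification of geometric canonicity at every face on the boundary of the Borromean rings link complement to checking the convexity inequality of \refprop{LocalConvexity} for a single pair of adjacent cusp triangles sitting across two adjacent hexagons in the unfilled cusp $K$. By \reflem{ThreeCases}, there are exactly three combinatorial types of such a pair, corresponding to whether the two Dehn filling slopes $m_1, m_2$ have opposite signs, are both negative, or are both positive. So it suffices to confirm that the convexity inequality holds in each of these three cases.

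First I would treat \textbf{Case 1}, the mixed-sign case of \reffig{Borr3Cases} (left). Here the triangle inside the non-convex hexagon can be viewed as one further layering tetrahedron on top of the layered solid torus whose cusp triangulation is the convex hexagon, so the hexagon-and-adjacent-vertices picture is exactly the configuration of \reflem{FaceHex}; the argument there (compute the horoball diameters as products of adjacent edge-vector lengths via \reflem{diamac} and \reflem{diamab}, pass to Minkowski space, and invoke \reflem{A+C/2} together with the handedness input \reflem{AC}) applies verbatim to give the convexity inequality. Next comes \textbf{Case 2}, both slopes negative: the setup of \reflem{FaceHex} goes through up to and including \reflem{A+C/2}, reducing the problem to showing $-\pi < (A+C)/2 < 0$. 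By \reflem{AngNeg} we have $A<0$ and $C<0$, hence $A+C<0$; and from equation~\eqref{Eq:CuspAngles}, $2B - 2\pi < A+C$, so $A+C > -2\pi$ since $B>0$. Thus $-2\pi < A+C < 0$, i.e. $-\pi < (A+C)/2 < 0$, and convexity holds. Finally \textbf{Case 3}, both slopes positive: again \reflem{FaceHex} reduces us to the same inequality on $(A+C)/2$; by the edge-identification argument (the edge from $-1$ to $\zeta$ along $\vec{a}$ is identified with an edge of the convex hexagon from $\zeta$ to $1$, forcing $A=0$ after rotating so $-1,1$ lie on the real axis), and $C<0$ from the anticlockwise orientation of the triangles $-1\zeta\zeta'$ and $1\zeta'\zeta$, we get $A+C<0$, and then $A+C>-2\pi$ again from equation~\eqref{Eq:CuspAngles} with $B>0$.

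Combining the three cases, the convexity inequality of \refprop{LocalConvexity} holds for the single representative pair of adjacent cusp triangles in every combinatorial situation, and \reflem{OneFace} upgrades this to geometric canonicity at every face on the boundary of the Borromean rings link complement, proving \reflem{GeomCanBoundBR}.

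The conceptual content is really front-loaded into the earlier lemmas, so the main obstacle here is organisational rather than computational: one must be careful that the hexagon labelling of \reffig{Borr3Cases} genuinely matches the generic hexagon of \reffig{GSHex} so that \reflem{A+C/2} is applicable in Cases 2 and 3 (this is why those proofs only borrow \reflem{FaceHex} "up to and including \reflem{A+C/2}" and then supply a separate geometric argument bounding $A+C$), and that in Case 1 the extra triangle really does arise as a legitimate layering so that the left-handedness conclusion \reflem{AC} transfers. Once those identifications are in place, the remaining inequalities are immediate from $0<B<\pi$ and equation~\eqref{Eq:CuspAngles}.
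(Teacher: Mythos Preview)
Your proposal is correct and follows essentially the same approach as the paper: the paper's proof of \reflem{GeomCanBoundBR} is precisely the three-case analysis preceding the lemma statement, reducing via \reflem{OneFace} and \reflem{ThreeCases}, handling Case~1 by invoking \reflem{FaceHex} directly, and in Cases~2 and~3 running the argument only through \reflem{A+C/2} before supplying the separate bounds on $A+C$ (via \reflem{AngNeg} and the edge-identification giving $A=0$, respectively, together with equation~\eqref{Eq:CuspAngles}). Your summary, including the caveat about why \reflem{AC} is available in Case~1 but not in Cases~2 and~3, matches the paper's logic exactly.
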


\begin{theorem}\label{Thm:CanonBorromean}
Let $L$ be a fully augmented link with exactly two crossing circles, as in \reffig{BorromeanRings}. Let $M$ be the manifold obtained by Dehn filling the crossing circles of $S^3-L$ along slopes $m_1, m_2 \in (\QQ\cup\{1/0\})- \{0, 1/0, \pm 1, \pm 2\}$.
Then $M$ admits a canonical triangulation.
\end{theorem}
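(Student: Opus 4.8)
The plan is to assemble \refthm{CanonBorromean} directly from the pieces already established in this section, since the only real work remaining is bookkeeping. By \refthm{MainBorromean} (Ham--Purcell), the manifold $M$ obtained by Dehn filling the two crossing circles along the allowed slopes $m_1, m_2$ admits a geometric triangulation $\mathcal{D}$, built by gluing two triangulated solid tori (each a double cover of a layered solid torus or a side-by-side layered solid torus) onto the two square bases left over from the octahedral decomposition of the Borromean rings link complement. By \refprop{LocalConvexity}, to prove $\mathcal{D}$ is the geometrically canonical decomposition it suffices to verify local convexity of the associated polyhedral complex $\tilde{\mathcal{D}}$ in Minkowski space at every $2$-dimensional face, i.e.\ to check the convexity inequality of \refprop{LocalConvexity} at each face.

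First I would partition the faces of $\mathcal{D}$ into the five types listed at the start of \refsec{ProvingCanon}: (1) a face on the boundary of the Borromean rings link complement, (2)--(3) interior and core faces of a double cover of a layered solid torus, (4)--(5) interior and core faces of a side-by-side layered solid torus. Then I would invoke the lemmas already proved: \reflem{GeomCanDLST} handles (2) and (3), \reflem{GeomCanSideLST} handles (4) and (5), and \reflem{GeomCanBoundBR} handles (1). Since every face of $\mathcal{D}$ is of one of these five types, the convexity inequality of \refprop{LocalConvexity} holds at every $2$-dimensional face of $\tilde{\mathcal{D}}$, so $\tilde{\mathcal{D}}$ is locally convex everywhere and hence $\mathcal{D}$ is geometrically canonical. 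Because the canonical (Epstein--Penner) decomposition of a finite-volume cusped hyperbolic $3$-manifold is unique once the cusp horoball neighbourhoods are fixed, and a geometrically canonical ideal decomposition coincides with it, $\mathcal{D}$ is \emph{the} canonical decomposition of $M$; as $\mathcal{D}$ consists only of ideal tetrahedra, $M$ admits a canonical triangulation.

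One subtlety worth addressing explicitly is that the choice of horoball neighbourhoods in \refsec{ProvingCanon} (the horoball $H_{1,\infty}$ of height $1$ at $\infty$ together with its images under covering transformations) must be the maximal cusp neighbourhood, or at least a choice for which the Epstein--Penner convex hull construction yields exactly $\tilde{\mathcal{D}}$; here $M$ has a single unfilled cusp, so there is a $1$-parameter family of such choices and each gives the same combinatorial decomposition, so the normalisation is harmless. A second point: \reflem{OneFace} and \reflem{ThreeCases} reduce the verification in case (1) from infinitely many faces to a single pair of adjacent cusp triangles across two adjacent hexagons, in each of three combinatorial configurations determined by the signs of $m_1, m_2$; these reductions use the triple symmetry of each boundary face around a hexagon and the reflection gluing between the two solid tori of \reflem{BorromeanTet}, and they are what make \reflem{GeomCanBoundBR} a finite check rather than an infinite one.

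I do not expect a genuine obstacle at this stage: the theorem is a corollary of \reflem{GeomCanDLST}, \reflem{GeomCanSideLST}, \reflem{GeomCanBoundBR} and \refprop{LocalConvexity}, together with the uniqueness half of \cite{EpsteinPenner} and \cite{Akiyoshi}. The only thing requiring care is making sure the five-way case split is genuinely exhaustive --- that every $2$-dimensional face of $\mathcal{D}$ lies either on the boundary of the Borromean rings link complement or strictly inside one of the two attached solid tori, with the faces internal to each solid torus being exactly the interior and core faces treated by Gu\'eritaud--Schleimer --- and this follows immediately from the description of the triangulation in \refsec{DFBorromean}. So the proof is essentially: enumerate the face types, cite the corresponding lemma for each, conclude local convexity everywhere, and appeal to uniqueness of the Epstein--Penner decomposition to upgrade ``geometrically canonical'' to ``canonical.''
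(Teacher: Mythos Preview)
Your proposal is correct and follows essentially the same approach as the paper: the paper's proof simply states that the theorem follows directly from \reflem{GeomCanBoundBR}, \reflem{GeomCanDLST}, and \reflem{GeomCanSideLST}, which is exactly the case split you outline. Your additional remarks on the horoball normalisation and on exhaustiveness of the five-way case split are reasonable elaborations but not required by the paper.
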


\begin{proof}
Follows directly from \reflem{GeomCanBoundBR}, \reflem{GeomCanDLST}, and \reflem{GeomCanSideLST}. 
\end{proof}

\bibliographystyle{amsplain}
\bibliography{biblio}

\end{document}